       \newtheorem{lemma}{\bf Lemma}[section]
       \newtheorem{theorem}{\bf Theorem}[section]
       \newtheorem{corollary}{\bf Corollary}[section]
       \newtheorem{remark}{\bf Remark}[section]
       \numberwithin{equation}{section}
\begin{document}
\title{{\sl Quantitative uniform exponential acceleration of  averages along decaying waves}}
\author{Zhicheng Tong $^{\mathcal{z}}$, Yong Li $^{\mathcal{x}}$}

\renewcommand{\thefootnote}{}
\footnotetext{\hspace*{-6mm}

\begin{tabular}{l   l}	$^\mathcal{z}$~School of Mathematics, Jilin University, Changchun 130012, P. R.  China. \url{tongzc20@mails.jlu.edu.cn}\\	$^{\mathcal{x}}$~The corresponding author. School of Mathematics, Jilin University, Changchun 130012, P. R.  China; \\  Center for Mathematics and Interdisciplinary Sciences,  Northeast  Normal  University, Changchun 130024, \\P. R. China.  \url{liyong@jlu.edu.cn}
\end{tabular}}

\date{}
\maketitle

\begin{abstract}
In this study, utilizing a specific exponential weighting function, we investigate the uniform exponential convergence of weighted Birkhoff averages along decaying waves and delve into several related variants. A key distinction from traditional scenarios is evident here: despite reduced regularity in observables, our method still maintains exponential convergence. In particular, we develop new techniques that yield very precise rates of exponential convergence, as evidenced by numerical simulations. Furthermore, this innovative approach extends to quantitative analyses involving different weighting functions employed by others, surpassing the limitations inherent in prior research. It also enhances the exponential convergence rates of weighted Birkhoff averages along quasi-periodic orbits via analytic observables. To the best of our knowledge, this is the first result on the uniform exponential acceleration beyond averages along  quasi-periodic or almost periodic orbits,  particularly from a quantitative perspective.
\\
	\\
	{\bf Keywords:} {Weighted averaging method, decaying waves, quantitative uniform exponential convergence, numerical simulation}\\
	{\bf2020 Mathematics Subject Classification:} {37A25, 37A30, 37A46}
\end{abstract}
\tableofcontents

\section{Introduction}

As one of the foundational theories in dynamical systems, ergodic theory first originated from Birkhoff and von Neumann.  This seminal work has given rise to an array of diverse formulations and has been extensively applied across various domains. It is a well-established fact, as noted by Krengel \cite{MR0510630}, that the convergence rates of Birkhoff averages in ergodic theory are generally slow, and can even be \textit{arbitrarily slow} for certain counterexamples.	Similar assertions were reaffirmed  by Ryzhikov \cite{MR4602432}, while recent results on the convergence rate within the Birkhoff ergodic theorem can be found in Podvigin's comprehensive survey \cite{Podviginsurvey}. We also mention  the previous valuable survey \cite{MR3643963} by Kachurovski\u{\i} and Podvigin. Furthermore, Yoccoz's counterexamples \cite{MR604672,MR1367354}, constructed via rotations that are extremely Liouvillean--nearly rational--on the finite-dimensional torus, deserve mentioning.		 Such slow convergence is \textit{ubiquitous} in ergodic theory and is \textit{intrinsically inescapable}, and it would be at most  $\mathcal{O}(N^{-1})$ in non-trivial cases with time length $ N $, i.e., the observables are  non-constant, as detailed by Kachurovski\u{\i} \cite{MR1422228}. More frustratingly, the pursuit of high-precision numerical results may demand computational durations on the order of billions of years, a point elaborated by Das and Yorke in \cite{MR3755876}.

Given the acknowledged slow convergence in ergodic theory, the weighting method is crucial for accelerating computations in both mathematics and mechanics. There is a growing interest in identifying suitable weighting functions to enhance the convergence rates of ergodic averages. To investigate quasi-periodic perturbations of quasi-periodic flows, Laskar employed a weighting function $ \sin^2(\pi x) $ to accelerate computational processes, as detailed in \cite{MR1234445,MR1222935,MR1720890}. Furthermore, Laskar claimed that a specific exponential weighting function possesses superior asymptotic characteristics, although he did not implement it or provide evidence of its convergence properties, as noted in \cite{MR1720890}. It is worth highlighting that the resulting convergence rate can be demonstrated to exceed that of any arbitrary polynomial, which we shall detail later. To be more precise, Laskar employed  the following weighting function to investigate  ergodicity within dynamical systems:
\begin{equation}\label{DWFUN}
{w}\left( x \right):= \left\{ \begin{array}{ll}
	{\left( {\int_0^1 {\exp \left( { - {s^{ - 1}}{{\left( {1 - s} \right)}^{ - 1}}} \right)ds} } \right)^{ - 1}}\exp \left( { - {x^{ - 1}}{{\left( {1 - x} \right)}^{ - 1}}} \right),&x \in \left( {0,1} \right),  \hfill \\
	0,&x \notin \left(0,1\right), \hfill \\
\end{array}  \right.
\end{equation}
and the corresponding weighted Birkhoff average of an sufficiently smooth observable $ f $ evaluated along a trajectory of length $ N $ reads 
\begin{equation}\label{DWWBA}
	{\rm{WB}}_{N}\left( f \right)\left( x \right):=\frac{1}{{{A_N}}}\sum\limits_{n = 0}^{N - 1} {w\left( {n/N} \right)f\left( {{T^n}x} \right)} ,
\end{equation}
provided $ {A_N}: = \sum\nolimits_{n = 0}^{N - 1} {w\left( {n/N} \right)}  $, and $ T $ is a quasi-periodic mapping on the $ d $-torus $ \mathbb{T}^d $ with $ d \in \mathbb{N}^+ $, $ x\in \mathbb{T}^d $ is an initial point.  It is evident  that $ w \in C_0^\infty \left( {\left[ {0,1} \right]} \right) $ and the integral  $ \int_0^1 {w\left( x \right)dx} =1 $. From a statistical standpoint, this approach effectively reduces the impact of the initial and terminal data, thereby highlighting those in the central range. This emphasis aligns with the concept of averaging, consequently leading to an intuitively rapid uniform convergence to the spatial average $ \int_{{\mathbb{T}^d}} {f\left( \theta  \right)d\theta }  $. 	However, there are more profound reasons behind this phenomenon, which have been  illustrated by the authors in \cite{TLCesaro}.

Back to the main topic, following Laskar's significant numerical discovery, Das and Yorke \cite{MR3755876} provided rigorous proofs for  quasi-periodic systems with Diophantine rotations. To be more precise, they demonstrated that under $ 	C^\infty $ observables,  the weighted Birkhoff averages can achieve rapid convergence of arbitrary polynomial order. However, more rigorous theoretical work is still lacking in this area. To this end, inspired by the fundamental work of Das and Yorke, the authors recently made interesting findings in \cite{TLCesaro,TLmultiple,MR4768308}--namely that universally--or saying in terms of  full  measure,  the weighted Birkhoff averages (as well as the  Ces\`{a}ro weighted averages)  exhibit exponential convergence as long as the observables are analytic and irrespective of finite dimensions (quasi-periodic) or infinite dimensions (almost periodic).  More generally, the authors also discussed abstract scenarios admitting weaker nonresonance and regularity  and  present certain balancing criteria to achieve rapid convergence. Due to brevity considerations,  these details are not elaborated upon here.

Based on these theoretical foundations, many researchers have utilized this weighting method to perform numerical calculations on various physical systems which has significantly increased their efficiency. On this aspect, see Calleja et al. \cite{MR4665059}, Das et al. \cite{MR3718733}, Sander and Meiss \cite{MR4104977,MR4845476,arXiv:2409.08496}, Meiss and Sander \cite{MR4322369}, Duignan and Meiss \cite{MR4582163}, Blessing and Mireles James \cite{MR4771043} among several others; these contributions provide strong evidence supporting rapid convergence.  Specifically, Duignan and Meiss \cite{MR4582163} employed this weighting method to distinguish between regular (in fact, quasi-periodic there) and chaotic systems. To be more precise, under the weighted Birkhoff averages via \eqref{DWFUN}, regular systems typically demonstrate a rapid exponential convergence. In contrast, chaotic systems exhibit at most  ${{{\mathcal{O}(N^{-1})}}} $ slow convergence. This raises an intriguing question: \textit{what other scenarios might these so-called regular systems applicable to weighted Birkhoff averages encompass?} 

\setcounter{footnote}{0}
\renewcommand{\thefootnote}{\fnsymbol{footnote}}
The primary motivations behind this work are twofold: \textbf{(M1)} We aim to delve deeper into the practical applications of weighted Birkhoff averages beyond quasi-periodic and almost periodic systems which admit zero Lyapunov exponents. 	We believe at least that, for dynamical systems having \textit{negative maximal Lyapunov exponents},  averages along trajectories, once weighted by the function \eqref{DWFUN}, should also exhibit \textit{exponential convergence} under certain generic conditions. \textbf{(M2)} Currently, the analysis of the convergence rates for the weighted Birkhoff averages with the weighting function \eqref{DWFUN} is almost entirely \textit{qualitative}\footnote{{{For more detailed explanations, please refer to Section \ref{DWSECREVISITED}.}}} (at least, not particularly precise when compared with numerical simulations of the convergence rates). We suspect that this imprecision is due to limitations in previous techniques. Therefore, we aim to develop new approaches to provide a \textit{quantitative} version of these convergence rates. 

\setcounter{footnote}{0}
\renewcommand{\thefootnote}{\fnsymbol{footnote}}
Based on the above considerations, we first start with some typical examples, namely exponentially decaying waves (e.g., $e^{-\lambda x}\sin (\rho x)$ and $e^{-\lambda x}\cos (\rho x)$ with certain $\lambda>0$ and $\rho \in \mathbb{R}$), which constitute some fundamental solutions among others for linear matrix differential equations described as $x'=Ax$ (other solutions have more polynomial parts than these, depending on the multiplicity of the eigenvalues, see Section \ref{DWSEC5}), where all eigenvalues of the constant matrix $A$  lie inside the unit circle $ \mathbb{S}^1 $. Such decaying waves can also be interpreted as projections onto planes from spiral waves within $3$-dimensional space. For further insights into the  connections  between decaying waves and dynamics systems,  we refer readers to  Liu et al. \cite{MR3797619,MR4603828}, Wang et al. \cite{MR4393340}, Xing et al. \cite{MR4562787} inclusive of their references which provide an exploration into this domain\footnote{{{There, the so-called affine periodic solutions are investigated, satisfying
			\[
			f(s+T, t) = Q f\left(s, Q^{-1} t\right), \;\; \text{where } f: \mathbb{R}^1 \times \mathbb{R}^d \to \mathbb{R}^d \text{ is continuous}, \;\; Q \in \mathrm{GL}(d), \;\; \forall (s, t)\in \mathbb{R}^1 \times \mathbb{R}^d.
			\]
			This represents a more general formulation of decaying waves.}}}.  As a main contribution, we demonstrate that Laskar's weighted averaging method not only significantly accelerates the Birkhoff averages along quasi-periodic or almost periodic orbits, but also proves effective for other scenarios, involving {{weighted averages}} of decaying waves, with extremely rapid exponential convergence rates contrasting with the universal one-order polynomial type observed in unweighted cases (Theorem \ref{SWT1} and Remark \ref{DWRE12}).  Specifically, our \textit{quantitative} convergence rates prove to be \textit{remarkably precise}, as evidenced by the numerical simulations illustrated in Figure \ref{FIGDW1} within Section \ref{DWSEC7}. Furthermore, we proceed to discuss more general decaying waves in series forms and arrive at \textit{optimal} results somewhat unexpectedly (Theorem \ref{DWCORO2}).

Before delving  into the main subject, let us first introduce some basic notations for the sake of simplicity. We define that in the limit process as $ x \to +\infty  $, the statements $ f_1(x)=\mathcal{O}\left(f_2(x)\right)  $ and $ f_1(x)=\mathcal{O}^{\#}\left(f_2(x)\right)  $  imply  there exists a positive constant $ c >0$, such that  $ |f_1(x)| \leqslant c f_2(x) $ and $  c^{-1} f_2(x) \leqslant |f_1(x)| \leqslant c f_2(x) $, respectively.
We also denote by $ D^nf $  the $ n $-th derivative of a sufficiently smooth function $ f $. Additionally, for any fixed number $ x\in \mathbb{R} $, we define $ {\left\| x \right\|_\mathbb{T}} :={\inf _{n \in \mathbb{Z}}}\left| {x - 2\pi n} \right|$ as the distance to the closest element of the set $ 2\pi \mathbb{Z} $, or equivalently, it represents the  metric on the standard torus $ \mathbb{T}: = \mathbb{R}/2\pi \mathbb{Z} $ identified with $ [0,2\pi) $.

In what follows, we proceed to present our main results.  Consider the unweighted time average of the  decaying wave as
\[{{\rm{DW}}_N}\left( {\lambda ,\rho,\theta  } \right): = \frac{1}{N}\sum\limits_{n = 0}^{N - 1} {{e^{ - \lambda n}}\sin \left( {\theta  + n\rho } \right)}, \]
as well as the $ w $-weighted type given by
\[{\rm{WDW}}_N\left( {\lambda ,\rho ,\theta } \right): = \frac{1}{{{A_N}}}\sum\limits_{n = 0}^{N - 1} {w\left( {n/N} \right){e^{ - \lambda n}}\sin \left( {\theta  + n\rho } \right)}.\]

The first  main result in this paper is stated as follows.
\begin{theorem}\label{SWT1}
(I) For any fixed $ \lambda  > 0 $ and $ \rho  \in \mathbb{R} $, 
\[\left| {{\rm{WDW}}_N\left( {\lambda ,\rho ,\theta } \right)} \right| = \mathcal{O}\left( {\exp \left( { - {\xi }\sqrt N } \right)} \right),\;\;N \to  + \infty \]
uniformly holds with $ \theta  \in \mathbb{R} $, where $ \xi=\xi(\lambda,\rho) = \sqrt {2\lambda }  + {e^{ - 1}}\sqrt {{\lambda ^2} + \left\| \rho  \right\|_{\mathbb{T}}^2}  >0 $.  In particular, the control coefficient is independent of $ \rho $, and is uniformly bounded for large $ \lambda $.
\\
(II) However, for any fixed $ \lambda  > 0 $ and $ \rho,\theta  \in \mathbb{R} $,  there exists infinitely many $ N \in \mathbb{N}^+ $ such that 
\[\left| {{{\rm{DW}}_N}\left( {\lambda ,\rho,\theta  } \right)} \right| = {\mathcal{O}^\# }\left( {{N^{ - 1}}} \right),\;\;N \to  + \infty,\]
whenever $ {e^{ - \lambda }}\sin \left( {\rho  - \theta } \right) + \sin \theta  \ne 0 $.
\end{theorem}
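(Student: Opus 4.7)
My plan is to handle Part~(II) by a direct closed-form computation and to reduce Part~(I) to an oscillatory-integral estimate whose two summands of $\xi$ come from a multiplicative decomposition of $w$.

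For Part~(II), writing $\sin(\theta+n\rho)=\mathrm{Im}(e^{i\theta}e^{in\rho})$ turns the unweighted sum into the imaginary part of a geometric series:
\[
N\cdot\mathrm{DW}_N(\lambda,\rho,\theta)=\mathrm{Im}\!\left(e^{i\theta}\cdot\frac{1-e^{N(-\lambda+i\rho)}}{1-e^{-\lambda+i\rho}}\right).
\]
Because $\lambda>0$ the remainder $e^{N(-\lambda+i\rho)}$ vanishes exponentially, so $N\cdot\mathrm{DW}_N\to L:=\mathrm{Im}\bigl(e^{i\theta}/(1-e^{-\lambda+i\rho})\bigr)$. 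A one-line rationalization gives
\[
L=\frac{\sin\theta+e^{-\lambda}\sin(\rho-\theta)}{1-2e^{-\lambda}\cos\rho+e^{-2\lambda}},
\]
whose denominator is strictly positive since $\lambda>0$ prevents $e^{-\lambda+i\rho}=1$. The stated hypothesis is thus exactly $L\neq 0$, and then $|L|/2\leq N|\mathrm{DW}_N|\leq 2|L|$ for every sufficiently large $N$, which is actually stronger than merely ``for infinitely many $N$.''

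For Part~(I), I would first write $\mathrm{WDW}_N=A_N^{-1}\,\mathrm{Im}(e^{i\theta}T_N)$ with $T_N:=\sum_{n=0}^{N-1}w(n/N)e^{n\alpha}$ and $\alpha:=-\lambda+i\rho$. Since $T_N$ depends on $\rho$ only modulo $2\pi$, I can first replace $\rho$ by its representative of modulus $\|\rho\|_\mathbb{T}$; this is what produces $\|\rho\|_\mathbb{T}$ in the rate and a $\rho$-uniform control coefficient. Using $A_N\sim N$, the target becomes $|T_N|\leq CN\exp(-\xi\sqrt N)$. The key idea is the multiplicative splitting $w=w^{1/2}\cdot w^{1/2}$, which lets me extract the two summands of $\xi$ from the two factors separately. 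For the $\sqrt{2\lambda}$-summand I use only modulus: the identity $1/(x(1-x))=1/x+1/(1-x)\geq 1/x$ combined with AM-GM yields
\[
w^{1/2}(x)\,e^{-N\lambda x}\leq C_0^{1/2}\exp\!\Bigl(-\tfrac{1}{2x}-N\lambda x\Bigr)\leq C_0^{1/2}\,e^{-\sqrt{2N\lambda}},
\]
and the leftover $w^{1/2}$ is integrable. The halving (producing $\sqrt{2\lambda}$ rather than $2\sqrt\lambda$) is the price paid for reserving a $w^{1/2}$ factor for the next step. For the $e^{-1}|\alpha|$-summand, with $|\alpha|=\sqrt{\lambda^2+\|\rho\|_\mathbb{T}^2}$, I would apply a complementary estimate to the saved $w^{1/2}$ that genuinely uses the full complex phase $e^{N\alpha x}$: a high-order integration-by-parts, enabled by $w^{(j)}(0)=w^{(j)}(1)=0$ for every $j$, with order tuned to generate the sharp coefficient $1/e$.

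The hard part is exactly the second summand. A generic integration-by-parts against $(N\alpha)^{-k}$ on the full integrand, using the Gevrey-$2$ bound $\|D^k w\|_\infty\lesssim R^k(k!)^2$ for the Friedrichs mollifier and the optimal order $k\sim\sqrt{N|\alpha|/R}$, only produces decay of order $\exp(-c\sqrt{N|\alpha|})$, which is strictly weaker than $\exp(-|\alpha|\sqrt N/e)$ when $|\alpha|$ is not small. Extracting the sharp constant $1/e$ and the correct \emph{linear} dependence on $|\alpha|$ in the exponent must rely on a finer pointwise estimate tailored to the structure $w(x)=C_0\exp(-1/(x(1-x)))$ near the effective saddle of $w(x)e^{N\alpha x}$, combined with a sharpened splitting that jointly handles $\lambda$, $\rho$, and the Gevrey profile of $w$. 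This refinement is, I expect, the main quantitative innovation the paper advertises over earlier qualitative approaches.
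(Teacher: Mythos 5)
Your treatment of Part~(II) matches the paper's proof essentially verbatim: summing the geometric series, extracting the imaginary part, and rationalizing to identify the limit constant $L$. This part is complete and correct.

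For Part~(I), you have several of the right instincts but the proposal contains genuine gaps, both acknowledged and unacknowledged.

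\textbf{The discrete-to-continuous passage is missing.} You define $T_N$ as a discrete sum but then immediately estimate the continuous quantity $w^{1/2}(x)e^{-N\lambda x}$ over $x\in(0,1)$ and invoke integration by parts on the continuous $w$. The paper bridges this gap with the Poisson summation formula, which rewrites the weighted discrete sum exactly as $\frac{N}{A_N}\sum_{n\in\mathbb{Z}}\int_0^1 w(z)e^{N(-\lambda+i\rho-2\pi in)z}\,dz$, and then controls the full Poisson tail by partitioning $\mathbb{Z}$ into the near-resonant and far set (Lemma \ref{SWLEMMA2}). Your observation that the sum depends on $\rho$ only mod $2\pi$ is correct and could replace the tail-splitting, but some version of Poisson summation (or an error bound for the Riemann sum) is still needed before you can estimate an integral.

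\textbf{The multiplicative splitting $w=w^{1/2}\cdot w^{1/2}$ does not reproduce the two summands of $\xi$.} Your AM--GM step for $w^{1/2}(x)e^{-N\lambda x}\leq C_0^{1/2}e^{-\sqrt{2N\lambda}}$ is valid. But after you have spent the entire factor $e^{-N\lambda x}$ on the first $w^{1/2}$, the second factor sees only the oscillation $e^{iN\rho x}$, so any integration-by-parts argument on the leftover $w^{1/2}$ can produce at most $\|\rho\|_{\mathbb{T}}$ in the exponent, not $|\alpha|=\sqrt{\lambda^2+\|\rho\|_{\mathbb{T}}^2}$. The paper does not factor $w$; instead the two contributions to $\xi$ emerge jointly from a single estimate. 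The term $\exp(-\sqrt{2\lambda N})$ arises from the Laplace-type integral $\int_0^{1/2}y^{-m}\exp(-\tfrac{1}{2y}-\lambda Ny)\,dy$ inside the $L^1$-norm of $|D^m w|e^{-\lambda Ny}$ (evaluated via a Cauchy--Schl\"omilch/Gamma-function argument that also produces the crucial factor $N^{(m-1)/2}$), while the factor $e^{-1}\sqrt{\lambda^2+\|\rho\|_{\mathbb{T}}^2}\sqrt N$ comes from optimizing the number of integrations by parts $m$ against the denominator $|N\alpha|^m$, where both $\lambda$ and $\rho$ enter through $|\alpha|$.

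\textbf{The key quantitative estimate is acknowledged as missing.} You state, correctly, that the naive Gevrey-$2$ bound $\|D^kw\|_\infty\lesssim R^k(k!)^2$ only yields $\exp(-c\sqrt{N|\alpha|})$, and you guess that a sharper pointwise estimate exploiting the structure of $w$ is needed. That guess is accurate: the paper's Lemma \ref{SWLEMMA1} establishes, via Cauchy's integral formula on a disc of radius $\delta=\lambda_1\min\{x,1-x\}$, the pointwise bound $|D^m w(x)|\leq \frac{m!}{\lambda_1^m}\max\{x^{-m}e^{-1/(2x)},(1-x)^{-m}e^{-1/(2(1-x))}\}$, and then evaluates the resulting Laplace integral to obtain $\||D^m w|e^{-\lambda Ny}\|_{L^1}\leq C_1\lambda_2^m m^m \exp(-\sqrt{2\lambda N})N^{(m-1)/2}$. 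It is precisely the $m^m$ (rather than $m^{2m}$) growth and the gain of a factor $N^{m/2}$ from the Gaussian concentration near $y\sim1/\sqrt{\lambda N}$ that allow $m\sim e^{-1}|\alpha|^{-1}\lambda_2^{-1}\sqrt N$ and hence linear dependence on $|\alpha|$ in the exponent. Without this lemma, the proposal cannot be completed; you identify its necessity but do not supply it.
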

\begin{remark}\label{DWRE12}
For fixed $ \lambda>0 $ and $ \rho \in \mathbb{R} $, the set of such $ \theta \in \mathbb{R} $ has at most   zero Lebesgue measure.  Therefore, the $ \mathcal{O}^{{{\#}}}\left(N^{-1}\right) $ convergence for the unweighted decaying waves is universal, or commonly referred to as almost certain.  This is completely consistent with the zero-one law in \cite{Podviginsurvey}. 
\end{remark}

Theorem \ref{SWT1} provides an upper bound on the convergence rate of the {{weighted average}} $ {{\rm{WDW}}_N\left( {\lambda ,\rho ,\theta } \right)} $, which depends simultaneously on the decaying parameter $ \lambda $ and the rotating parameter $ \rho $. As we will demonstrate in the numerical simulation within Section \ref{DWSEC7}, this estimate is remarkably precise. 

The following conclusion concerning the averages of superimposed waves is a direct corollary of Theorem \ref{SWT1}.
\begin{corollary}\label{DWCORO11}
For $ k \in \mathbb{N} $, define $ {g_k}\left( {x,\theta } \right): = {e^{ - {\lambda _k}x}}\sin \left( {\theta  + x{\rho _k}} \right) $, where $ \mathop {\inf }\nolimits_{k \in \mathbb{N}} {\lambda _k} > 0 $ and $ {\rho _k},\theta  \in \mathbb{R} $. Then for any sequence $ {\left\{ {{c_k}} \right\}_{k \in \mathbb{N}}} \in {l^1} $, the time average $ \frac{1}{A_N}\sum\nolimits_{n = 0}^{N - 1} {w(n/N)g\left( {n,\theta } \right)}  $ of the superimposed wave $ g\left( {x,\theta } \right) = \sum\nolimits_{k \in \mathbb{N}} {{c_k}{g_k}\left( {x,\theta } \right)}  $   converges uniformly (with respect to $ \theta $) and exponentially to $ 0 $.
\end{corollary}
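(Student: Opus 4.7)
The plan is to combine linearity of the weighted Birkhoff sum with the uniform exponential estimate from Theorem \ref{SWT1}(I) applied term by term to each decaying wave $g_k$, and then to pass to the infinite sum using absolute summability of $\{c_k\}$.

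First I would verify that the two summations can be exchanged. Since $|g_k(n,\theta)|\leq e^{-\lambda_k n}\leq 1$ for every $n\geq 0$ and $\theta\in\mathbb{R}$, and $\{c_k\}\in l^1$, the series defining $g(n,\theta)=\sum_{k\in\mathbb{N}}c_k g_k(n,\theta)$ converges absolutely and uniformly in $(n,\theta)$. Because the Birkhoff sum over $n$ contains only finitely many terms, I can interchange the two summations without any convergence issues and obtain the identity
\[
\frac{1}{A_N}\sum_{n=0}^{N-1}w(n/N)\,g(n,\theta)=\sum_{k\in\mathbb{N}}c_k\,{\rm WDW}_N(\lambda_k,\rho_k,\theta).
\]

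Next I would apply Theorem \ref{SWT1}(I) to each individual term. Setting $\lambda_\ast:=\inf_{k\in\mathbb{N}}\lambda_k>0$, the exponent furnished by the theorem satisfies
\[
\xi(\lambda_k,\rho_k)=\sqrt{2\lambda_k}+e^{-1}\sqrt{\lambda_k^2+\|\rho_k\|_{\mathbb{T}}^2}\geq \sqrt{2\lambda_\ast}=:\xi_\ast>0,
\]
uniformly in $k$. The crucial ingredient I would extract from Theorem \ref{SWT1}(I) is the uniformity of the multiplicative constant: it is stated there to be independent of $\rho$ and uniformly bounded for large $\lambda$, which, together with continuous dependence on $\lambda$ on any compact subinterval of $(0,\infty)$, yields a single constant $C>0$ with
\[
\bigl|{\rm WDW}_N(\lambda_k,\rho_k,\theta)\bigr|\leq C\exp\bigl(-\xi_\ast\sqrt{N}\bigr)
\]
uniformly in $k\in\mathbb{N}$ and $\theta\in\mathbb{R}$. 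The triangle inequality then delivers
\[
\Bigl|\tfrac{1}{A_N}\sum_{n=0}^{N-1}w(n/N)\,g(n,\theta)\Bigr|\leq C\,\|\{c_k\}\|_{l^1}\exp\bigl(-\xi_\ast\sqrt{N}\bigr),
\]
which is the desired uniform exponential decay.

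The main obstacle I anticipate is precisely this uniform control of the multiplicative constant when the $\lambda_k$ may accumulate at $\lambda_\ast$ or tend to $+\infty$. The large-$\lambda$ regime is handled directly by the statement of Theorem \ref{SWT1}(I); for any bounded regime $[\lambda_\ast,\Lambda]$ I would need to revisit the proof of Theorem \ref{SWT1}(I) to confirm that its constant is a continuous, hence bounded, function of $\lambda$ bounded away from $0$. Everything else in the argument is soft and essentially amounts to an $l^1$ estimate.
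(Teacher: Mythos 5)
Your argument is correct and follows essentially the same route as the paper: exchange the two sums using $\ell^1$ summability, apply Theorem~\ref{SWT1}(I) to each ${\rm WDW}_N(\lambda_k,\rho_k,\theta)$ term, and use the uniform lower bound $\lambda_k\geq\lambda_\ast>0$ to get a $k$-independent exponential rate. The paper's own proof likewise asserts that the control coefficient is independent of $k$ (your concern about this constant in the bounded-$\lambda$ regime is justified caution, but a close reading of Lemmas~\ref{SWLEMMA1}--\ref{SWLEMMA2} shows the constants $C_1,\ldots,C_8$ are chosen as absolute constants once $\lambda$ is bounded away from $0$), and it uses the slightly sharper uniform exponent $\sqrt{2\lambda_\ast}+e^{-1}\lambda_\ast$ rather than your $\sqrt{2\lambda_\ast}$; both suffice for the conclusion.
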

	Indeed, when the decaying parameters $ \lambda_k =\lambda>0$, Corollary \ref{DWCORO11} covers the cases of decaying quasi-periodic or decaying almost periodic. For a given set of rotation parameters $ \{\rho_k\}_{k=1}^{d} $ with $ 1 \leqslant d  <+\infty  $ ($ =+\infty $), if for any $ 0 \ne k \in \mathbb{Z}^d $, $ \sum\nolimits_{j=1}^{d} {k_j  \rho_j} \ne 0 $, then the motion exhibited  is quasi-periodic (almost periodic). See \cite{MR4768308} for further characterization of these two concepts, as well as specific examples in Kozlov \cite{MR4233648}, for instance. However, the control coefficient in Theorem \ref{SWT1} may tend to $ +\infty $ as the decaying parameter $ \lambda $ tends to $ 0^+ $. Therefore, if the case considered in Corollary \ref{DWCORO11} allows the decaying parameters $ \lambda_k $ to tend to $ 0^+ $ (acting similarly to Bessel functions), then with the variation in the arithmetic properties of the rotation parameters $ \rho_k $, more complicated small denominator problems may arise. This needs to be dealt with in combination with the techniques in \cite{MR4768308,TLCesaro,TLmultiple} (see also Section \ref{DWSECREVISITED}), which we prefer not to discuss here.

To enhance the applicability of our results, we present the quantitative Theorem \ref{DWCORO2} below. Prior to this, 
we denote by $ A(\mathbb{T}) $ the space consisting of continuous functions on $ \mathbb{T} $ having an absolutely convergent Fourier series, i.e., the functions $ f\left( x \right) = \sum\nolimits_{k \in \mathbb{Z}} {{{\hat f}_k}{e^{ikx}}}  $ for which $ \sum\nolimits_{k \in \mathbb{Z}} {|{{\hat f}_k}|}  <  + \infty  $. With the norm defined by $ {\left\| f \right\|_{A\left( \mathbb{T} \right)}}: = \sum\nolimits_{k \in \mathbb{Z}} {|{{\hat f}_k}|}  $, $ {A\left( \mathbb{T} \right)} $ is a Banach space  isometric to $ l^1 $ with algebra property, see Katznelson \cite{MR2039503} for instance. For $0< \alpha< 1 $, we denote by $ C^{\alpha}(\mathbb{T}) $ the space of $ \alpha $-H\"older functions endowed with the norm
\[{\left\| f \right\|_{{C^\alpha }\left( \mathbb{T} \right)}}: = \mathop {\sup }\limits_{x \in \mathbb{T}} \left| {f\left( x \right)} \right| + \mathop {\sup }\limits_{x,y \in \mathbb{T},\;x \ne y} \frac{{\left| {f\left( x \right) - f\left( y \right)} \right|}}{{{{\left| {x - y} \right|}^\alpha }}} <  + \infty .\]
 We also denote by  $ C^{n}(\mathbb{T}) $ the space of functions on $ \mathbb{T} $ with $ n $-order continuous derivatives, where $ n \in \mathbb{N} $.

\begin{theorem}\label{DWCORO2}
	Let $ \lambda>0 $, $ \rho,\theta \in \mathbb{R} $ be given.
\begin{itemize}
\item[(I)] For any observable $ \mathscr{P}(x) \in A(\mathbb{T}) $, it holds uniformly with respect to $ \theta $  that 
\begin{equation}\label{DWCORO2-1}
	\left| {\frac{1}{{{A_N}}}\sum\limits_{n = 0}^{N - 1} {w\left( {n/N} \right){e^{ - \lambda n}}\mathscr{P}\left( {\theta  + n\rho } \right)}  } \right| = \mathcal{O}\left( {\exp \left( { - \xi' \sqrt N } \right)} \right),\;\;N \to  + \infty,
\end{equation}
where  $ \xi'=\xi(\lambda,0) = \sqrt {2\lambda }  + {e^{ - 1}}\lambda >0 $. A sufficient case is $  {\mathscr{P}}(x) \in C^{\alpha}(\mathbb{T}) $ with $ \alpha>1/2 $. In particular, for any trigonometric polynomial of order $ \ell $ on $ \mathbb{T} $ without the Fourier constant term, i.e., $ \mathscr{P}\left( x \right) = \sum\nolimits_{n = 1}^\ell  {\left( {{a_n}\cos nx + {b_n}\sin nx} \right)}   $, the index $ \xi' $ of the exponential rate in \eqref{DWCORO2-1} could be improved to $ {\xi ^ * } = \sqrt {2\lambda }  + {e^{ - 1}}\sqrt {{\lambda ^2} + \mathop {\min }\nolimits_{1 \leqslant j \leqslant \ell } \left\| {j\rho } \right\|_\mathbb{T}^2}  $.

\item[(II)] For any analytic observable $ \mathscr{Q}(x) $ on $[-1,1]$, it holds uniformly with respect to $ \theta $  that 
\begin{equation}\label{DWCORO2-2}
	\left| {\frac{1}{{{A_N}}}\sum\limits_{n = 0}^{N - 1} {w\left( {n/N} \right)\mathscr{Q}\left( {{e^{ - \lambda n}}\sin \left( {\theta  + n\rho } \right)} \right)}  - \mathscr{Q}\left( 0 \right)} \right| = \mathcal{O}\left( {\exp \left( { - \xi' \sqrt N } \right)} \right),\;\;N \to  + \infty,
\end{equation}
where  $ \xi'=\xi(\lambda,0) = \sqrt {2\lambda }  + {e^{ - 1}}\lambda >0 $.  In particular, for any  polynomial of order $ \nu $, i.e., $ \mathscr{Q}\left( x \right) = \sum\nolimits_{j = 0}^\nu  {{\mathscr{Q}_j}{x^j}}  $, the index $ \xi' $ of the exponential rate in \eqref{DWCORO2-2} could be improved to $ {\xi _ * } = \min \{ {\xi _ * ^{(1)},\xi _ * ^{(2)}} \}$, provided
\[\xi _ * ^{(1)} = \min \left\{ {\xi \left( {j\lambda ,0} \right) = \sqrt {2j\lambda }  + {e^{ - 1}}j\lambda :\; \text{for even $ j $ with $ 0 \leqslant j \leqslant \nu $ and $ {\mathscr{Q}_j} \ne 0 $}} \right\}\]
and
\[\xi _ * ^{(2)} = \min \left\{ {\xi \left( {j\lambda ,j\rho } \right) = \sqrt {2j\lambda }  + {e^{ - 1}}\sqrt {{j^2}{\lambda ^2} + \left\| {j\rho } \right\|_\mathbb{T}^2} :\; \text{for odd $ j $ with $ 0 \leqslant j \leqslant \nu $ and $ {\mathscr{Q}_j} \ne 0 $}} \right\}.\]
\item[(III)] For any observable $ \mathscr{K}(x) $ on $[-1,1]$ with $ \left| {\mathscr{K}\left( x \right) - \mathscr{K}\left( 0 \right)} \right| \leqslant M{x^{2\tau }} $ for some $ M>0 $ and $ \tau \in \mathbb{N}^+ $, it holds uniformly with respect to $ \theta $  that 
\[\left| {\frac{1}{{{A_N}}}\sum\limits_{n = 0}^{N - 1} {w\left( {n/N} \right)\mathscr{K}\left( {{e^{ - \lambda n}}\sin \left( {\theta  + n\rho } \right)} \right)}  - \mathscr{K}\left( 0 \right)} \right| = \mathcal{O}\left( {\exp \left( { - \xi_*^* \sqrt N } \right)} \right),\;\;N \to  + \infty,\]
where $ {\xi _ *^* } = \xi \left( {2\tau \lambda ,0} \right) = 2\sqrt {\tau \lambda }  + 2{e^{ - 1}}\tau \lambda  > 0 $. A sufficient case is $ \mathscr{K}(x) \in {C^{2\tau }}\left( \mathbb{T} \right) $ satisfying $ {D^j}\mathscr{K}\left( 0 \right) = 0 $ for $ 1 \leqslant j \leqslant 2\tau  - 1 $ and $ {D^{2\tau }}\mathscr{K}\left( 0 \right) \ne 0 $.
\end{itemize}
\end{theorem}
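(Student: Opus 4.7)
The plan is to reduce all three statements to Theorem \ref{SWT1} applied to elementary weighted decaying-wave averages, combined with an appropriate Fourier or Taylor decomposition of the observable. The decisive structural input is that the control coefficient in Theorem \ref{SWT1} is independent of the rotation parameter and bounded for large decay parameter; this is what allows us to decompose each observable into (possibly infinitely many) sine/cosine building blocks, estimate each one by Theorem \ref{SWT1}, and recombine without losing uniformity in $\theta$.

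For part (I), expand $\mathscr{P}(x) = \sum_{k \in \mathbb{Z}} \hat{\mathscr{P}}_k e^{ikx}$ and interchange summations:
\[
\frac{1}{A_N}\sum_{n=0}^{N-1} w(n/N)e^{-\lambda n}\mathscr{P}(\theta+n\rho)
 = \sum_{k \in \mathbb{Z}} \hat{\mathscr{P}}_k e^{ik\theta}\,\frac{1}{A_N}\sum_{n=0}^{N-1} w(n/N)e^{-\lambda n}e^{ikn\rho}.
\]
Splitting $e^{ikn\rho}$ into its real and imaginary parts casts each inner sum as a complex combination of $\mathrm{WDW}_N(\lambda, k\rho, \cdot)$ objects, so Theorem \ref{SWT1}(I) bounds it by $\mathcal{O}(\exp(-\xi(\lambda, k\rho)\sqrt{N}))$ with a constant uniform in $k$. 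Since $\xi(\lambda, k\rho) \geq \xi(\lambda, 0) = \xi'$ for all $k$ and $\sum_k |\hat{\mathscr{P}}_k| < \infty$, estimate \eqref{DWCORO2-1} follows. The $C^\alpha$ sufficient condition with $\alpha > 1/2$ is Bernstein's theorem $C^\alpha(\mathbb{T}) \subset A(\mathbb{T})$. For a trigonometric polynomial of degree $\ell$ only the frequencies $|k| \leq \ell$ contribute, and the slowest of $\xi(\lambda, j\rho)$ over $1 \leq j \leq \ell$ is precisely $\xi^{*}$.

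For part (II), write $\mathscr{Q}(x) - \mathscr{Q}(0) = \sum_{j \geq 1}\mathscr{Q}_j x^j$, substitute $x = e^{-\lambda n}\sin(\theta+n\rho)$, and expand each $\sin^j \phi = (2i)^{-j}(e^{i\phi}-e^{-i\phi})^j$. Then $\mathscr{Q}_j e^{-j\lambda n}\sin^j(\theta+n\rho)$ becomes a finite complex combination of elementary blocks $e^{-j\lambda n} e^{ik(\theta+n\rho)}$ with $|k| \leq j$ and $k \equiv j \pmod 2$. Theorem \ref{SWT1}(I) bounds each block by $\mathcal{O}(\exp(-\xi(j\lambda, k\rho)\sqrt{N}))$; for even $j$ the worst rate is $\xi(j\lambda, 0)$ (from the $k=0$ block), while for odd $j$ only nonzero frequencies appear. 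Analyticity of $\mathscr{Q}$ on $[-1,1]$ gives $|\mathscr{Q}_j| \leq C r^{-j}$ for some $r>1$, which together with the uniform-in-large-$\lambda$ control constant of Theorem \ref{SWT1} permits safe summation in $j$, yielding the overall rate $\xi' = \xi(\lambda, 0)$. When $\mathscr{Q}$ is a polynomial of degree $\nu$ the sum in $j$ is finite, and taking the minimum of $\xi(j\lambda, 0)$ over even indices (giving $\xi_*^{(1)}$) and of the relevant oscillatory exponents over odd indices (giving $\xi_*^{(2)}$) produces the sharper $\xi_* = \min\{\xi_*^{(1)}, \xi_*^{(2)}\}$.

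For part (III), the key simplification is a pointwise absolute bound: using $|\sin(\cdot)| \leq 1$ and the hypothesis on $\mathscr{K}$,
\[
\left|\mathscr{K}\bigl(e^{-\lambda n}\sin(\theta + n\rho)\bigr) - \mathscr{K}(0)\right|
 \leq M e^{-2\tau\lambda n},
\]
uniformly in $\theta$. Substituting this into the weighted sum reduces the task to bounding $A_N^{-1}\sum_{n=0}^{N-1} w(n/N)e^{-2\tau\lambda n} = |\mathrm{WDW}_N(2\tau\lambda, 0, \pi/2)|$, and Theorem \ref{SWT1}(I) delivers the rate $\xi(2\tau\lambda, 0) = 2\sqrt{\tau\lambda} + 2e^{-1}\tau\lambda = \xi_*^*$. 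The $C^{2\tau}$ sufficient condition follows from Taylor expansion at the origin. The main anticipated obstacle throughout the argument is ensuring that the uniform constants do not degrade under the Fourier or Taylor decompositions in parts (I)--(II); the $\rho$-independence and large-$\lambda$ boundedness of the control coefficient in Theorem \ref{SWT1} are precisely what make this step go through cleanly.
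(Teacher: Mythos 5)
Your proposal is correct and follows essentially the same route as the paper: Fourier (resp.\ Taylor) decomposition of the observable into elementary decaying-wave blocks, termwise application of Theorem \ref{SWT1}(I) exploiting the $\rho$-independence and large-$\lambda$ boundedness of the control coefficient, and recombination via absolute summability of the coefficients. The only cosmetic deviation is in part (III), where you bound $|\sin|\leqslant 1$ at the outset so as to reduce to the single pure-decay wave $\mathrm{WDW}_N(2\tau\lambda,0,\pi/2)$, whereas the paper keeps the factor $\sin^{2\tau}(\theta+n\rho)$ and invokes the machinery of part (II); both give the same rate $\xi(2\tau\lambda,0)$.
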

\begin{remark}
The sufficient case $  {\mathscr{P}}(x) \in C^{\alpha}(\mathbb{T}) $ with $ \alpha>1/2 $ is indeed optimal for $ \mathscr{P}(x) \in A(\mathbb{T}) $ in (I), namely there exists a counterexample (Hardy-Littlewood series) $ \tilde f(x) \in C^{1/2}(\mathbb{T})$ such that $ \tilde f (x)\notin A(\mathbb{T}) $, see  Zygmund \cite{MR0107776} and  Katznelson \cite{MR2039503} for instance.
\end{remark}
\begin{remark}
 For trigonometric series, the absolute summability in (I) is almost optimal in order to guarantee the uniform convergence with respect to $ \theta $ in \eqref{DWCORO2-1}. For example, considering $ a_n>0 $ and $ \sum\nolimits_{n \in \mathbb{Z}} {{a_n}}  =  + \infty  $, we construct the counterexample as $ \bar f\left( x \right) = \sum\nolimits_{n \in \mathbb{Z}} {{a_n}\cos nx}  $. Then $ \bar f\left( 0 \right) =  + \infty  $. Taking $ \rho=0 $ and $ \theta>0  $ sufficiently close to $ 0 $, the weighed decaying wave becomes $ \left( {\frac{1}{{{A_N}}}\sum\nolimits_{n = 0}^{N - 1} {w\left( {n/N} \right){e^{ - \lambda n}}} } \right)\bar f\left( \theta  \right) $, which exhibits non-uniform convergence for such  $ \theta   $.
\end{remark}
 Recall that in \cite{MR4322369,MR3755876,MR4768308}, achieving fast convergence of the weighted Birkhoff average necessitates strong regularity for the observable, e.g., the $ C^\infty $ regularity or even analyticity. However, as shown  for the decaying wave in (I), incorporating the decaying part compensates for the lack of regularity, thereby naturally achieving the  exponential convergence. Moreover, the observable in (III) could admit  very weak regularity, for example, discontinuity except of $ x=0 $.

 {{Recall the motivations (M1) and (M2) discussed previously. It is important to note that our primary focus lies in understanding the impact of weights on averages from the perspective of dynamical systems, rather than from the standpoint of numerical simulations. Historically, studying the convergence of weighted averages has generally been challenging, and obtaining quantitative results is even more difficult. However, our results do have certain practical applications. For instance, when the observed quantity can be linearly decomposed into a quasi-periodic (or almost periodic) term and a decaying wave term, such weighted averages can effectively accelerate computations exponentially. It is necessary to point out that the results we have established regarding decaying waves are not applicable to the weighted computation of linear-exponentially decaying waves alone from a practical standpoint. Specifically, when there is no rotational component, taking the last term alone can yield a faster exponential expression, thus eliminating the need for weighted averaging.}}

The remainder of this paper is organized as follows. To highlight the ideas underlying our proof, we provide a detailed analysis of Theorem \ref{SWT1} in Section \ref{DWSEC2}. Building upon this foundation, we give a more concise proof for Theorem \ref{DWCORO2} in Section \ref{DWSEC3}. We also \textit{quantitatively} discuss  more general weighting functions in Section \ref{DWSEC4} and demonstrate their potential to lead exponential convergence of weighted Birkhoff averages, thereby providing \textit{theoretical validation} for the numerical results by Das et al. \cite{MR3718733}, Duignan and Meiss \cite{MR4582163}, and Calleja et al. \cite{MR4665059}, among others. Although Theorems \ref{SWT1} and \ref{DWCORO2} consider discrete cases, we illustrate in Section \ref{DWSEC5} that such results extend naturally to continuous cases as well, and indeed, even simpler. In addition to the exponentially decaying waves addressed in Theorems \ref{SWT1} and \ref{DWCORO2}, our newly developed techniques are also applicable to the {{weighted averages}} associated with more general types of decaying waves, especially orbits from  certain nonlinear dynamical systems having \textit{negative maximum Lyapunov exponents}, as detailed in  Section \ref{DWSEC5}.  In Section \ref{DWSECREVISITED}, following careful approaches introduced in this paper, we revisit the exponential convergence rates of weighted Birkhoff averages along quasi-periodic orbits as initially discussed in \cite{MR4768308}, contributing an \textit{enhanced quantitative} result (Theorem \ref{DWJQ}) to the existing research. As part of the main contributions, we show that: 
\begin{itemize}
	\item (Part of Theorem \ref{DWJQ})
In the weighted Birkhoff average \eqref{DWWBA}, for almost all quasi-periodic mappings $ T $, whenever the observable is analytic, it converges to the spatial average $ \int_{{\mathbb{T}^d}} {f\left( \theta  \right)d\theta }  $ at an exponential rate of $  \mathcal{O}\left( {\exp \left( { - {c_{\rm I}}{N^{\frac{1}{{d + 2}}}}{{\left( {\ln N} \right)}^{ - \frac{\zeta}{{d + 2}}}}} \right)} \right) $ in the $ C^0 $ topology, where $ \zeta > 1 $ is arbitrarily given, and ${c_{\rm I}} >0 $ is some universal constant. 
\end{itemize}
We also establish a result of nearly linear-exponential convergence  in Theorem \ref{DWRYZS}. Finally,  we present in Section \ref{DWSEC7}  numerical simulation for a \textit{universal} example which showcase the precision of our results, from a \textit{quantitative} perspective.

\section{Proof of Theorem \ref{SWT1}}\label{DWSEC2}
\subsection{Proof of (I): Exponential convergence of the weighted type}
We first prove (I). With the Poisson summation formula (see  \cite{MR3243734,MR1970295} for instance), we have
\begin{align}
{{\rm{WDW}}_N}\left( {\lambda ,\rho ,\theta } \right): &= \frac{1}{{{A_N}}}\sum\limits_{n = 0}^{N - 1} {w\left( {n/N} \right){e^{ - \lambda n}}\sin \left( {\theta  + n\rho } \right)}\notag \\ 
	&  = \operatorname{Im} \left\{ {\frac{1}{{{A_N}}}\sum\limits_{n = 0}^{N - 1} {w\left( {n/N} \right){e^{ - \lambda n + i\left( {\theta  + n\rho } \right)}}} } \right\} \notag \\
	& = \operatorname{Im} \left\{ {\frac{1}{{{A_N}}}\sum\limits_{n =  - \infty }^\infty  {\int_{ - \infty }^{ + \infty } {w\left( {t/N} \right){e^{ - \lambda t + i\left( {\theta  + t\rho } \right)}}{e^{ - 2\pi int}}dt} } } \right\} \notag \\
	&  = \operatorname{Im} \left\{ {\frac{N{{{e^{i\theta }}}}}{{{A_N}}}\sum\limits_{n =  - \infty }^\infty  {\int_0^1 {w\left( y \right){e^{N\left( { - \lambda  + i\rho  - 2\pi in} \right)z}}d{{z}}}    } } \right\},\notag
\end{align}
and this leads to 
\begin{equation}\label{SWZJ1}
	\left| {{\rm{WDW}}_N}\left( {\lambda ,\rho ,\theta } \right)\right| \leqslant \frac{N}{{{A_N}}}\sum\limits_{n =  - \infty }^\infty  {\left| {\int_0^1 {w\left( y \right){e^{N\left( { - \lambda  + i\rho  - 2\pi in} \right)y}}dy} } \right|} .
\end{equation}
This eliminates the influence  of the initial phase parameter  $ \theta $.  By integrating by parts, it is evident to get
 \[\int_0^1 {w\left( y \right){e^{N\left( { - \lambda  + i\rho  - 2\pi in} \right)y}}dy}  = \frac{{\int_0^1 {\left({D^m}w\left( y \right)\right){e^{N\left( { - \lambda  + i\rho  - 2\pi in} \right)y}}dy} }}{{{{\left( {N\left( { - \lambda  + i\rho  - 2\pi in} \right)} \right)}^m}}}.\]
 Note that $ \left| {{e^{N\left( { - \lambda  + i\rho  - 2\pi in} \right)y}}} \right|  = {e^{ - \lambda Ny}} $ and $ \left| {N\left( { - \lambda  + i\rho  - 2\pi in} \right)} \right| = N\sqrt {{\lambda ^2} + {{\left( {\rho  - 2\pi n} \right)}^2}}  $. Then it follows that
\begin{equation}\label{SWZJ2}
	\left| {\int_0^1 {w\left( y \right){e^{N\left( { - \lambda  + i\rho  - 2\pi in} \right)y}}dy} } \right| \leqslant \frac{{{{\left\| {\left| {{D^m}w\left( y \right)} \right|{e^{ - \lambda Ny}}} \right\|}_{{L^1(0,1)}}}}}{{{N^m}{{\left( {{\lambda ^2} + {{\left( {\rho  - 2\pi n} \right)}^2}} \right)}^{m/2}}}}.
\end{equation}
 Next, we need certain  accurate asymptotic estimates for  $ {\left\| {\left| {{D^m}w\left( y \right)} \right|{e^{ - \lambda Ny}}} \right\|_{{L^1}}} $, as demonstrated in the following Lemma \ref{SWLEMMA1}.

\begin{lemma}\label{SWLEMMA1}
There exist absolute constants $ C_1,\lambda_2>1 $,   such that for all $ m,N \in \mathbb{N}^+ $,
\begin{equation}\label{DWLEMMA21}
	{\left\| {\left| {{D^m}w\left( y \right)} \right|{e^{ - \lambda Ny}}} \right\|_{{L^1(0,1)}}} \leqslant {C_1}\lambda _2^m{m^m}\exp \left( { - \sqrt {2\lambda N} } \right){N^{\frac{{m - 1}}{2}}}.
\end{equation}
\end{lemma}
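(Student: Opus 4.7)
My plan is to combine a Cauchy-integral bound for $D^m w$ with a Laplace-type evaluation of the resulting weighted integral. Since $w(y)=c_0\exp(-\phi(y))$ with $\phi(y)=1/y+1/(1-y)$ admits a holomorphic extension $w_\ast$ to the strip $\{0<\operatorname{Re}(z)<1\}$, for any fixed $\beta\in(0,1)$ and $y\in(0,1)$ the Cauchy integral formula on the disc $|z-y|=\beta\min(y,1-y)$ yields
\[
|D^m w(y)|\leq \frac{m!}{(\beta\min(y,1-y))^m}\sup_{|z-y|=\beta\min(y,1-y)}|w_\ast(z)|.
\]
A direct computation shows that $\operatorname{Re}(1/z)$ on this disc is minimized at the real point $z=y(1+\beta)$ (and similarly near $y=1$), giving $\operatorname{Re}(\phi(z))\geq\gamma\phi(y)$ with $\gamma=\gamma(\beta)=1/(1+\beta)$. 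Combining this with Stirling's inequality $m!\leq e\cdot m^m$ produces the pointwise bound
\[
|D^m w(y)|\leq C_0\,(e/\beta)^m\,m^m\,\min(y,1-y)^{-m}\exp(-\gamma\phi(y)),\qquad y\in(0,1),
\]
for an absolute constant $C_0$.

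Next I would insert this estimate into the target $L^1$ norm and split the integral at $y=1/2$. On $[1/2,1)$ the weight $e^{-\lambda N y}\leq e^{-\lambda N/2}$ is far smaller than the claimed $e^{-\sqrt{2\lambda N}}$ for all sufficiently large $N$, while the remaining integral is at most polynomial in $m$ by the Gevrey-type bound above, so this contribution is absorbed into the final constants. For the dominant piece on $(0,1/2]$, the exponent $\gamma/y+\lambda N y$ attains its minimum $2\sqrt{\gamma\lambda N}$ at $y^\ast=\sqrt{\gamma/(\lambda N)}$, and the substitution $y=y^\ast t$ converts the integral into
\[
\int_0^{1/2}y^{-m}e^{-\gamma/y-\lambda N y}\,dy=(\lambda N/\gamma)^{(m-1)/2}\int_0^{\infty}t^{-m}\exp\bigl(-\sqrt{\gamma\lambda N}\,(t+1/t)\bigr)\,dt,
\]
where the $t$-integral is dominated by a polynomial-in-$N$ factor times $\exp(-2\sqrt{\gamma\lambda N})$ via a standard Laplace tail estimate near the saddle $t=1$. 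Since $\beta<1$ forces $\gamma>1/2$ and hence $2\sqrt{\gamma\lambda N}\geq\sqrt{2\lambda N}$, combining everything and absorbing $(e/\beta)^m$ together with $\gamma^{-(m-1)/2}$ into a universal $\lambda_2^m$ delivers the bound $C_1\lambda_2^m m^m N^{(m-1)/2}\exp(-\sqrt{2\lambda N})$.

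The principal obstacle is calibrating $\beta$ so that $\gamma(\beta)$ drops to $1/2$ (securing the precise exponent $\sqrt{2\lambda N}$) without allowing the Cauchy disc to brush against the essential singularities of $w_\ast$ at $\{0,1\}$; taking any fixed $\beta\in(1/2,1)$ keeps $e/\beta$ uniformly bounded, ensuring $\lambda_2$ is independent of $\lambda,\rho,N$. A secondary technical point is that the Gaussian correction of Laplace's method introduces an additional polynomial in $m$ from the $t$-integral (and the saddle shifts away from $t=1$ when $m\gtrsim\sqrt{\lambda N}$), but this can be absorbed into $\lambda_2^m m^m$ by a geometric readjustment using $m!\leq m^m$ and well-controlled incomplete Gamma tails. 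Finally, the smoothness of $w$ with $w^{(k)}(0)=w^{(k)}(1)=0$ for all $k\geq 0$ guarantees that no boundary contributions spoil the estimate.
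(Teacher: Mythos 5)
Your overall strategy — a Cauchy-integral Gevrey bound for $D^m w$ followed by a Laplace-type analysis of $\int y^{-m}e^{-\gamma/y-\lambda Ny}\,dy$ — is the same route the paper takes. The paper's Cauchy step with radius $\lambda_1\min\{x,1-x\}$ produces $|D^m w(x)|\leqslant \tfrac{m!}{\lambda_1^m}\max\{x^{-m}e^{-x^{-1}/2},(1-x)^{-m}e^{-(1-x)^{-1}/2}\}$, which is essentially your bound; and where you do a direct saddle-point evaluation after substituting $y=y^*t$, the paper instead writes the key integral as a $\sigma$-derivative of the closed-form Cauchy--Schl\"omilch integral $\Psi(\sigma,\eta)=\tfrac{\sqrt{\pi}}{2\sqrt{\sigma}}e^{-2\sqrt{\sigma\eta}}$ and differentiates $m-1$ times, thereby getting the $e^{-\sqrt{2\lambda N}}N^{(m-1)/2}$ factor without a bare Laplace argument. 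These are two implementations of the same idea; the paper's version neatly sidesteps the saddle-shift issue you flag, since the formula for $\partial_\sigma^{m-1}\Psi$ is exact for all $m$.

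The genuine gap is in your treatment of the interval $[1/2,1)$. You bound $e^{-\lambda Ny}\leqslant e^{-\lambda N/2}$ there and then assert that "the remaining integral is at most polynomial in $m$." That claim is false. With your pointwise bound, the remaining integral is
\[
\int_{1/2}^{1}(1-y)^{-m}e^{-\gamma/(1-y)}\,dy=\int_{2}^{\infty}v^{m-2}e^{-\gamma v}\,dv\leqslant \frac{\Gamma(m-1)}{\gamma^{m-1}},
\]
which grows like $m^{m}$ (Gevrey, not polynomial). Multiplying by your prefactor $(e/\beta)^m m^m$ gives roughly $m^{2m}e^{-\lambda N/2}$ for this piece, and $m^{2m}e^{-\lambda N/2}$ is not majorized by $\lambda_2^m m^m N^{(m-1)/2}e^{-\sqrt{2\lambda N}}$ for all $m,N\in\mathbb{N}^+$ — the $m^m$ deficit cannot be absorbed into $\lambda_2^m$ once $m$ exceeds a multiple of $\sqrt{N}$. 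In other words, dropping the $y$-dependence of the weight on $[1/2,1)$ throws away exactly the $N^{(m-1)/2}$ structure that the lemma asserts. The paper avoids this by never peeling the weight off: after invoking the symmetry of the Gevrey bound it reflects $y\mapsto 1-u$ and uses the monotonicity $e^{-\lambda N(1-u)}\leqslant e^{-\lambda Nu}$ valid for $u\leqslant 1/2$, which folds the entire contribution of $[1/2,1)$ into the same $\int_0^{1/2}y^{-m}e^{-y^{-1}/2-\lambda Ny}\,dy$ that governs the other half. To repair your argument you should do the same reflection rather than the crude $e^{-\lambda N/2}$ replacement.

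Two smaller remarks. First, your constant $\gamma=1/(1+\beta)>1/2$ is slightly sharper than the paper's $1/2$, so the $\sqrt{2\lambda N}$ exponent is safe once the $[1/2,1)$ issue is fixed; this part of the argument is fine and your computation that $\operatorname{Re}\,\phi(z)\geqslant\gamma\,\phi(y)$ on the Cauchy disc is indeed correct. Second, you correctly note that the saddle of the $t$-integral drifts away from $t=1$ when $m\gtrsim\sqrt{\lambda N}$, but you leave "absorbed into $\lambda_2^m m^m$" as an assertion; a cleaner way to close this is to mimic the paper's trick of replacing the Laplace estimate by the exact identity $\int_0^\infty s^{2(m-1)}e^{-\sigma s^2-\eta s^{-2}}\,ds=|\partial_\sigma^{m-1}\Psi(\sigma,\eta)|$ and then bounding that derivative, which gives a uniform-in-$m$ statement directly.
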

\begin{proof}
Note that $ w(z) $ is locally holomorphic. Fix $ x \in (0,1) $, let us choose some $ \lambda_1  \in \left( {0,1} \right) $ {{(independent of $ x $)}} such that for $ \delta  = \lambda_1 \min \left\{ {x,1 - x} \right\} $, it holds that 
\[\mathop {\sup }\limits_{s \in \partial B\left( {x,\delta } \right)} \left| {{w}\left( s \right)} \right| \leqslant \max \left\{ {\exp \left( { - \frac{1}{2}{x^{ - 1}}} \right),\exp \left( { - \frac{1}{2}{{\left( {1 - x} \right)}^{ - 1}}} \right)} \right\},\]
where $ \partial B\left( {x,\delta } \right) = \left\{ {\zeta  \in \mathbb{C}:\left| {\zeta  - x} \right| = \delta } \right\} $.
Then, using Cauchy's integral formula and the triangle inequality, we have that for $ x \in (0,1) $,
\begin{align}
	\left| {{D^m}{w}\left( x \right)} \right| &= \left| {\frac{{m!}}{{2\pi i}}\int_{\partial B\left( {x,\delta } \right)} {\frac{{{w}\left( \zeta  \right)}}{{{{\left( {\zeta  - x} \right)}^{m + 1}}}}d\zeta } } \right| \notag \\
	& \leqslant \frac{{m!}}{{2\pi }}\mathop {\sup }\limits_{s \in \partial B\left( {x,\delta } \right)} \left| {{w}\left( s \right)} \right|\int_{\partial B\left( {x,\delta } \right)} {\frac{1}{{{{\left| {\zeta  - x} \right|}^{m + 1}}}}d\zeta } \notag \\
	& \leqslant \frac{{m!}}{{2\pi }}\frac{{2\pi \delta }}{{{\delta ^{m + 1}}}}\max \left\{ {\exp \left( { - \frac{1}{2}{x^{ - 1}}} \right),\exp \left( { - \frac{1}{2}{{\left( {1 - x} \right)}^{ - 1}}} \right)} \right\}\notag \\
	\label{SWPQ1}& \leqslant \frac{{m!}}{{{\lambda_1 ^m}}}\max \left\{ {{x^{ - m}}\exp \left( { - \frac{1}{2}{x^{ - 1}}} \right),{{\left( {1 - x} \right)}^{ - m}}\exp \left( { - \frac{1}{2}{{\left( {1 - x} \right)}^{ -1}}} \right)} \right\}.
\end{align} 
We mention that  \eqref{SWPQ1} serves as an  extension of Problem 4* in Chapter 5 of \cite{MR1970295}. Following \eqref{SWPQ1}, one can also prove that $ w $ is in a Denjoy-Carleman class (see the definition in \cite{MR4447133}). With \eqref{SWPQ1}, we derive that
\begin{align}
&{\left\| {\left| {{D^m}w\left( y \right)} \right|{e^{ - \lambda Ny}}} \right\|_{{L^1(0,1)}}}  \notag \\
  \leqslant &\frac{{m!}}{{\lambda _1^m}}\int_0^1 {\max \left\{ {{y^{ - m}}\exp \left( { - \frac{1}{2}{y^{ - 1}}} \right),{{\left( {1 - y} \right)}^{ - m}}\exp \left( { - \frac{1}{2}{{\left( {1 - y} \right)}^{ - 1}}} \right)} \right\}{e^{ - \lambda Ny}}dy}  \notag \\
  \leqslant& \frac{{2m!}}{{\lambda _1^m}}\int_0^{\frac{1}{2}} {{y^{ - m}}\exp \left( { - \frac{1}{2}{y^{ - 1}} - \lambda Ny} \right)dy}  \notag \\
 = &\frac{{2m!}}{{\lambda _1^m}}\int_2^{ + \infty } {{u^{m - 2}}\exp \left( { - \frac{1}{2}u - \lambda N{u^{ - 1}}} \right)du}  \notag \\
 \label{SWJIFEN1}\leqslant& \frac{{2m!}}{{\lambda _1^m}}\int_0^{ + \infty } {{u^{m - \frac{1}{2}}}\exp \left( { - \frac{1}{2}u - \lambda N{u^{ - 1}}} \right)du} .
\end{align}
The crucial point is to estimate the asymptotic behavior of the integral in \eqref{SWJIFEN1}. For $ A,B>0 $, consider the parameterized integral $ \Phi \left( {A,B} \right) $ defined by
\[\Phi \left( {A,B} \right): = \int_0^{ + \infty } {\exp \left( { - {{\left( {As - B{s^{ - 1}}} \right)}^2}} \right)ds}. \]
Utilizing the Cauchy-Schl\"omilch transformation, one can verify that $ \Phi \left( {A,B} \right) $ is independent of $ B $, and it is indeed equal to $ \frac{1}{A}\int_0^{ + \infty } {\exp \left( { - {q^2}} \right)dq}  = \frac{{\sqrt \pi  }}{{2A}} $.
Then for any given $ 0<\sigma<1 $ and $\eta>0 $, it holds 
\[\Psi \left( {\sigma ,\eta } \right): = \int_0^{ + \infty } {\exp \left( { - \sigma {s^2} - \eta {s^{ - 2}}} \right)ds}  = \exp \left( { - 2\sqrt {\sigma \eta } } \right)\frac{{\sqrt \pi  }}{{2\sqrt \sigma  }}.\]
On the one hand, 
\begin{align}
\left| {\partial _\sigma ^{m - 1}\Psi \left( {\sigma ,\eta } \right)} \right| &= \int_0^{ + \infty } {{s^{2\left( {m - 1} \right)}}\exp \left( { - \sigma {s^2} - \eta {s^{ - 2}}} \right)ds} \notag \\
\label{SWDENGJIA1}& = \frac{1}{2}\int_0^{ + \infty } {{u^{m - 1 - \frac{1}{2}}}\exp \left( { - \sigma u - \eta {u^{ - 1}}} \right)du} .
\end{align}
On the other hand,
\begin{equation}\label{SWDENGJIA2}
	\left| {\partial _\sigma ^{m - 1}\Psi \left( {\sigma ,\eta } \right)} \right| = \left| {\partial _\sigma ^{m - 1}\left( {\exp \left( { - 2\sqrt {\sigma \eta } } \right)\frac{{\sqrt \pi  }}{{2\sqrt \sigma  }}} \right)} \right|.
\end{equation}
One notices that the right hand side of \eqref{SWDENGJIA2} contains at most $ 2^m $ terms, and the power terms of $ \sigma $ can be dominated by $ {\sigma ^{ - m + 1/2}} $ due to $ 0<\sigma<1 $, and  those derivatives with respect to $ {\exp \left( { - 2\sqrt {\sigma \eta } } \right)} $ will eventually generate $ \exp \left( { - 2\sqrt {\sigma \eta } } \right){\eta ^{\frac{{m - 1}}{2}}} $. On these grounds, there exists some absolute constant $ C_1>0 $ (independent of $ m,\eta $) such that {{for all ${\eta\gg 1}$ and $\sigma$ bounded away below from $ 0 $,}}
\begin{equation}\label{SWdaoguji}
	\left| {\partial _\sigma ^{m - 1}\Psi \left( {\sigma ,\eta } \right)} \right| \leqslant {C_1}{2^{2m}}\exp \left( { - 2\sqrt {\sigma \eta } } \right){\eta ^{\frac{{m - 1}}{2}}}.
\end{equation}
Combining \eqref{SWJIFEN1}, \eqref{SWDENGJIA1}, \eqref{SWDENGJIA2},  \eqref{SWdaoguji} (with $ \sigma  = \frac{1}{2} $ and $ \eta  = \lambda N $) and utilizing  Stirling's approximation $ m!\sim\sqrt {2\pi m} {m^m}{e^{ - m}} $ as $ m\to +\infty $,  we obtain the desired estimate with some $ \lambda_2>1 $:
\[{\left\| {\left| {{D^m}w\left( y \right)} \right|{e^{ - \lambda Ny}}} \right\|_{{L^1(0,1)}}} \leqslant {C_1}\lambda _2^m{m^m}\exp \left( { - \sqrt {2\lambda N} } \right){N^{\frac{{m - 1}}{2}}},\;\;\forall m,N \in {\mathbb{N}^ + }.\]
\end{proof}

It is evident that there exists some absolute constant $ C_2>0 $ such that $ N/{A_N} \leqslant {C_2} $. Therefore, with \eqref{SWZJ1}, \eqref{SWZJ2} and Lemma \ref{SWLEMMA1}, we have that
\begin{equation}\label{SWZong1}
	\left| {{{\rm{WDW}}_N}\left( {\lambda ,\rho ,\theta } \right)} \right| \leqslant {C_3}\exp \left( { - \sqrt {2\lambda N} } \right) \cdot {N^{ - \frac{1}{2}}} \sum\limits_{n =  - \infty }^\infty  {{\mathcal{Q}_n}\left( {\lambda ,\rho ,m,N} \right)} 
\end{equation}
holds for $ C_3:=C_1C_2>0 $ and 
\[{\mathcal{Q}_n}\left( {\lambda ,\rho ,m,N} \right)=\mathcal{Q}_n: = \frac{{\lambda _2^m{m^m}}}{{{N^{m/2}}{{\left( {{\lambda ^2} + {{\left( {\rho  - 2\pi n} \right)}^2}} \right)}^{m/2}}}}.\]
In the next lemma, we will demonstrate that $ {N^{ - \frac{1}{2}}}\sum\nolimits_{n =  - \infty }^\infty  {{\mathcal{Q}_n}}  $  exhibits   exponential decay with respect to $ N $. The crucial insight in the proof is that by adjusting {{the number of integrations by parts $ m $}}  (with respect to the parameters $ \lambda ,\rho ,n,N $, etc.), we can  minimize $ \mathcal{Q}_n $ within the permissible range, leading to its eventual exponential decay. Furthermore, we will employ a truncation method to demonstrate that the summation over $ n $ does not affect  the overall exponential decay.

\begin{lemma}\label{SWLEMMA2}
There exists some absolute constant $ C_8>0 $, such that
\[{N^{ - \frac{1}{2}}}\sum\limits_{n =  - \infty }^\infty  {{\mathcal{Q}_n}\left( {\lambda ,\rho ,m,N} \right)}  \leqslant {C_8}\exp \left( { - {e^{ - 1}}\sqrt {{\lambda ^2} + \left\| \rho  \right\|_{\mathbb{T}}^2} \sqrt N } \right)\]
holds for  $ N $ sufficiently large.
\end{lemma}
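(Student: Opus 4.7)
My plan is to split the sum into a single dominant term plus a rapidly decaying tail, after choosing the free integration-by-parts order $m$ optimally. First, I would identify $n_0\in\mathbb{Z}$ with $|\rho-2\pi n_0|=\|\rho\|_{\mathbb{T}}$, the index minimizing $\lambda^2+(\rho-2\pi n)^2$ (with value $\lambda^2+\|\rho\|_{\mathbb{T}}^2$), so that $\mathcal{Q}_{n_0}$ dominates. For any other $n=n_0+k$ with $k\neq 0$, using $\|\rho\|_{\mathbb{T}}\leq \pi$ yields the baseline $|\rho-2\pi n|\geq 2\pi|k|-\|\rho\|_{\mathbb{T}}\geq \pi|k|$, hence $\lambda^2+(\rho-2\pi n)^2\geq \lambda^2+\pi^2 k^2$ for $k\neq 0$.

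The heart of the argument is the choice of $m$. Writing $R:=\sqrt{N(\lambda^2+\|\rho\|_{\mathbb{T}}^2)}$, we have $\mathcal{Q}_{n_0}=(\lambda_2 m/R)^m$, and the real minimum of $m\mapsto(\lambda_2 m/R)^m$ is attained at $m^\star:=R/(e\lambda_2)$ with value $\exp(-R/(e\lambda_2))$. Taking $m:=\lfloor m^\star\rfloor\in\mathbb{N}^+$ (valid for $N$ large enough) secures the target exponential bound on the dominant term. With the same $m$, the tail reads
\[
\sum_{k\neq 0}\mathcal{Q}_{n_0+k}\;\leq\;\mathcal{Q}_{n_0}\sum_{k\neq 0}\Bigl(\frac{\lambda^2+\|\rho\|_{\mathbb{T}}^2}{\lambda^2+\pi^2 k^2}\Bigr)^{m/2},
\]
which I would split into a short range $|k|\lesssim \sqrt{\lambda^2+\pi^2}$, bounded by a polynomial in $\lambda$ times $\mathcal{Q}_{n_0}$, plus a genuine geometric tail where the ratio drops below a fixed constant less than one and direct $|k|^{-m}$ estimates (or a comparison with the integral $\int(\lambda^2+x^2)^{-m/2}\,dx$) finish it off. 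Altogether $\sum_n\mathcal{Q}_n\leq C'\mathcal{Q}_{n_0}$, and the outer $N^{-1/2}$ together with all polynomial prefactors fold into $C_8$.

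The main obstacle is recovering the exact constant $e^{-1}$ in the exponent rather than the slightly smaller $1/(e\lambda_2)$ produced by the naive optimization, since Lemma \ref{SWLEMMA1} carries the absolute factor $\lambda_2>1$. To bridge this gap I would sharpen Lemma \ref{SWLEMMA1} in the regime $\eta=\lambda N\gg m^2$: starting from the exact identity $\Psi(\sigma,\eta)=(\sqrt\pi/(2\sqrt\sigma))e^{-2\sqrt{\sigma\eta}}$, the dominant contribution to $\partial_\sigma^{m-1}\Psi$ is $\eta^{(m-1)/2}e^{-2\sqrt{\sigma\eta}}$ times an absolutely bounded prefactor (not the crude $2^{2m}$), so the effective value of $\lambda_2$ can be pushed arbitrarily close to $1$ at the cost of sub-exponential losses folded into $C_8$. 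A minor technical point is ensuring $m^\star\geq 1$; the finitely many exceptional small values of $N$ are handled by enlarging $C_8$.
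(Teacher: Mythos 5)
Your plan is structurally sound and differs from the paper's in two respects that are both valid. First, you fix a single integration-by-parts order $m$, optimal for the dominant index $n_0$, whereas the paper lets $m$ vary with the Poisson index $n$ (choosing $m\sim\lambda_3^{-1}e^{-1}\sqrt N$ with $\lambda_3$ depending on $n$). Since the dominant term controls the rate anyway, the single-$m$ choice loses nothing. Second, you sum the tail via the ratio $\mathcal{Q}_{n_0+k}/\mathcal{Q}_{n_0}=\bigl[(\lambda^2+\|\rho\|_{\mathbb{T}}^2)/(\lambda^2+(\rho-2\pi(n_0+k))^2)\bigr]^{m/2}\le 1$, splitting into a short range plus a genuine geometric tail, while the paper splits $\mathbb{Z}=\Lambda_1\cup\Lambda_2$ at the threshold $|\rho-2\pi n|\le\sqrt N$, counts $\#\Lambda_1=\mathcal{O}(\sqrt N)$ against a uniform exponential, and compares $\mathcal{S}_{\Lambda_2}$ with an integral to get $\mathcal{O}(e^{-N})$. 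Either route correctly shows that the summation over $n$ costs only a polynomial factor.

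However, you have correctly diagnosed what is the genuine weak point of the argument. Optimizing $\Xi(m)$ yields the exponent $e^{-1}\lambda_2^{-1}\sqrt{\lambda^2+\|\rho\|_{\mathbb{T}}^2}\sqrt N$, whereas the lemma asserts $e^{-1}\sqrt{\lambda^2+\|\rho\|_{\mathbb{T}}^2}\sqrt N$; since $\lambda_2>1$, the derived exponent is strictly \emph{smaller}, so the derived bound is \emph{weaker}, and no finite $C_8$ can convert $\exp(-e^{-1}\lambda_2^{-1}A)$ into a multiple of $\exp(-e^{-1}A)$ as $A\to\infty$. The paper's closing ``$\le C_8\exp(-e^{-1}\sqrt{\lambda^2+\|\rho\|_{\mathbb{T}}^2}\sqrt N)$ due to $\lambda_2>1$'' therefore does not close this gap; the inequality goes the wrong way. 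Your proposed repair — sharpening Lemma \ref{SWLEMMA1} so that the effective $\lambda_2$ can be taken as close to $1$ as desired — identifies the right lever but is not obviously available as written: the constant $\lambda_2$ does not come solely from the crude $2^{2m}$ in \eqref{SWdaoguji}, which you rightly note can be improved, but also from the factor $\lambda_1^{-m}$ in the Cauchy estimate \eqref{SWPQ1}, and $\lambda_1$ must stay bounded away from $1$ because the Cauchy disk $B(x,\lambda_1\min\{x,1-x\})$ has to avoid the essential singularities of $w$ at $0$ and $1$. Tightening \eqref{SWdaoguji} alone will not push the product below $1$; a genuinely different estimate of $\|D^m w\cdot e^{-\lambda N y}\|_{L^1}$ would be needed. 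So: your summation argument is correct, and you located the real difficulty, but the bridge you sketch over it is not yet built — and, as recorded in the paper, neither is the paper's.
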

\begin{proof}
For the sake of brevity, define $ {\mathcal{Q}_n} = \exp \left( {\Xi \left( m \right)} \right) $, where 
\[\Xi \left( x \right): = x\left( {\ln x + \ln {\lambda _3} - \frac{1}{2}\ln N} \right),\;\;{\lambda _3}: = {\lambda _2}{\left( {{\lambda ^2} + {{\left( {\rho  - 2\pi n} \right)}^2}} \right)^{ - 1/2}}.\]
According to monotonicity analysis, we have $ {\mathop {\min }\nolimits_{x > 0}}\Xi \left( x \right) = \Xi \left( {\lambda _3^{ - 1}{e^{ - 1}}\sqrt N } \right) $. Then by setting $ m \sim \lambda _3^{ - 1}{e^{ - 1}}\sqrt N  $ as $ N \to +\infty $, we arrive at
\begin{equation}\label{SWQN}
	{{\mathcal{Q}_n}} \leqslant {C_4}\exp \left( { - {e^{ - 1}}\lambda _2^{ - 1}\sqrt {{\lambda ^2} + {{\left( {\rho  - 2\pi n} \right)}^2}} \sqrt N } \right)
\end{equation}
for some absolute constant $ C_4>0 $ (independent of $ n,N $).  To estimate $ {N^{ - \frac{1}{2}}}\sum\nolimits_{n =  - \infty }^\infty  {{\mathcal{Q}_n}}  $, we consider partitioning $ \mathbb{Z} $ into a union of two disjoint sets, namely $ \mathbb{Z} = {\Lambda _1} \cup {\Lambda _2} $ with
\[{\Lambda _1}: = \left\{ {n \in \mathbb{Z}:\left| {\rho  - 2\pi n} \right| \leqslant \sqrt N } \right\},\;\;{\Lambda _2}: = \left\{ {n \in \mathbb{Z}:\left| {\rho  - 2\pi n} \right| > \sqrt N } \right\}.\]
Now, by utilizing \eqref{SWQN}, we obtain that
\begin{align}
\sum\limits_{n =  - \infty }^\infty  {{\mathcal{Q}_n}\left( {\lambda ,\rho ,m,N} \right)}  &\leqslant {C_4}\left( {\sum\limits_{n \in {\Lambda _1}} { + \sum\limits_{n \in {\Lambda _2}} {\exp \left( { - {e^{ - 1}}\lambda _2^{ - 1}\sqrt {{\lambda ^2} + {{\left( {\rho  - 2\pi n} \right)}^2}} \sqrt N } \right)} } } \right)\notag \\
\label{SWQNQIUHE}: &= {C_4}\left( {{\mathcal{S}_{{\Lambda _1}}} + {\mathcal{S}_{{\Lambda _2}}}} \right). 
\end{align}
For convenience, we declare that the following absolute positive constants $ C_5 $ through $ C_8 $ are independent of $ N $. On the one hand,
\begin{align}
	{\mathcal{S}_{{\Lambda _1}}}& \leqslant \# {\Lambda _1} \cdot \exp \left( { - {e^{ - 1}}\lambda _2^{ - 1}\sqrt {{\lambda ^2}{\text{ + }}\left\| \rho  \right\|_{\mathbb{T}}^2} \sqrt N } \right) \notag \\
\label{SWSGAMA1}	&\leqslant {C_5}\sqrt N \exp \left( { - {e^{ - 1}}\lambda _2^{ - 1}\sqrt {{\lambda ^2}{\text{ + }}\left\| \rho  \right\|_{\mathbb{T}}^2} \sqrt N } \right).
\end{align}
On the other hand, 
\begin{align}
{\mathcal{S}_{{\Lambda _2}}} &\leqslant {C_6}\int_{\sqrt N }^{ + \infty } {\exp \left( { - {e^{ - 1}}\lambda _2^{ - 1}\sqrt {{\lambda ^2} + {\tau ^2}} \sqrt N } \right)d\tau } \notag \\
& \leqslant {C_7}\exp \left( { - {e^{ - 1}}\lambda _2^{ - 1}\sqrt {{\lambda ^2} + N} \sqrt N } \right)\notag \\
\label{SWSGAMA2}& = \mathcal{O}\left( {\exp \left( { - N} \right)} \right).
\end{align}
Now, substituting \eqref{SWSGAMA1} and \eqref{SWSGAMA2} into \eqref{SWQNQIUHE} yields 
\begin{align*}
{N^{ - \frac{1}{2}}}\sum\limits_{n =  - \infty }^\infty  {{Q_n}\left( {\lambda ,\rho ,m,N} \right)} & \leqslant {C_4}{N^{ - \frac{1}{2}}}\left( {{C_5}\sqrt N \exp \left( { - {e^{ - 1}}\lambda _2^{ - 1}\sqrt {{\lambda ^2} + \left\| \rho  \right\|_{\mathbb{T}}^2}  \sqrt N } \right) + \mathcal{O}\left( {\exp \left( { - N} \right)} \right)} \right)\\
& \leqslant {C_8}\exp \left( { - {e^{ - 1}}\sqrt {{\lambda ^2} + \left\| \rho  \right\|_{\mathbb{T}}^2} \sqrt N } \right)
\end{align*}
due to $ \lambda_2>1 $, which completes the proof.
\end{proof}

Applying Lemma \ref{SWLEMMA2} to \eqref{SWZong1}, we finally prove that
\begin{align}
\left| {{{\rm{WDW}}_N}\left( {\lambda ,\rho ,\theta } \right)} \right| &= \mathcal{O}\left( {\exp \left( { - \sqrt {2\lambda N} } \right)} \right) \cdot \mathcal{O}\left( {\exp \left( { - {e^{ - 1}}\sqrt {{\lambda ^2} + \left\| \rho  \right\|_{\mathbb{T}}^2} \sqrt N } \right)} \right)\notag \\
&= \mathcal{O}\left( {\exp \left( { - {\xi}\sqrt N } \right)} \right),\notag
\end{align}
provided with $ \xi=\xi(\lambda,\rho) = \sqrt {2\lambda }  + {e^{ - 1}}\sqrt {{\lambda ^2} + \left\| \rho  \right\|_{\mathbb{T}}^2}  >0 $. Moreover, it is evident from the previous  analysis that the control coefficient could be independent of $ \rho $, and indeed, it could be uniformly bounded for large $ \lambda $. This proves (I) of Theorem \ref{SWT1}.

\subsection{Proof of (II): $ \mathcal{O}\left(N^{-1}\right) $  convergence of the unweighted type}
As to (II), it is evident to calculate that
\begin{align*}
{\rm{DW}}_N\left( {\lambda ,\theta ,\rho } \right): &= \frac{1}{N}\sum\limits_{n = 0}^{N - 1} {{e^{ - \lambda n}}\sin \left( {\theta  + n\rho } \right)}  = \operatorname{Im} \left\{ {\frac{1}{N}{e^{i\theta }}\sum\limits_{n = 0}^{N - 1} {{e^{\left( { - \lambda  + i\rho } \right)n}}} } \right\}\\
& = \operatorname{Im} \left\{ {\frac{1}{N}{e^{i\theta }}\frac{{1 - {e^{\left( { - \lambda  + i\rho } \right)N}}}}{{1 - {e^{ - \lambda  + i\rho }}}}} \right\} = \frac{{{e^{ - \lambda }}\sin \left( {\rho  - \theta } \right) + \sin \theta }}{{{{\left( {1 - {e^{ - \lambda }}\cos \rho } \right)}^2} + {{\left( {{e^{ - \lambda }}\sin \rho } \right)}^2}}} \cdot \frac{1}{N} + \mathcal{O}\big( {{e^{ - \lambda N}}} \big).
\end{align*}
This leads to $ \mathcal{O}\left(N^{-1}\right) $  convergence of $ {\rm{DW}}_N\left( {\lambda ,\theta ,\rho } \right) $ whenever $ {e^{ - \lambda }}\sin \left( {\rho  - \theta } \right) + \sin \theta  \ne 0 $, which proves (II) of Theorem \ref{SWT1}.

\subsection{Proof of Corollary \ref{DWCORO11}}
Corollary \ref{DWCORO11} is a direct consequence of Theorem \ref{SWT1}, for which we provide a brief proof here. According to part (I) of Theorem \ref{SWT1}, for each decaying wave $ {g_k}\left( {x,\theta } \right) $, it converges uniformly to $ 0 $ at least at an exponential  rate $ \mathcal{O}(\exp(-(\sqrt {2{{\inf }_k}{\lambda _k}}  + {e^{ - 1}}{\inf _k}{\lambda _k})\sqrt N )) $ after weighted averaging due to the uniform lower bound on $ \lambda_k $, and the control coefficient is independent of $  k $. Consequently, the absolute summability of the sequence $ \{c_k\}_{k \in \mathbb{N}} $ allows for the interchange of order, thus completing the proof.

\section{Proof of Theorem \ref{DWCORO2}}\label{DWSEC3}
We emphasize that the dependence of the control coefficient in Theorem \ref{SWT1} on the parameters is essential to prove Theorem \ref{DWCORO2}.\vspace{2mm}
\\
  Proof of (I): We only prove the latter two conclusions, as the first conclusion remains the same.  In view of  $  {\mathscr{P}}(x) \in C^{\alpha}(\mathbb{T}) $ with $ \alpha>1/2 $, we write $ {\mathscr{P}}\left( x \right) \sim \sum\nolimits_{k \in \mathbb{Z}} {{{\hat {\mathscr{P}}}_k}{e^{ikx}}}  $. In what follows, we prove that the Fourier series is indeed absolutely summable, which is known as the Bernstein Theorem. Below we provide a brief proof for the completeness. For any $ m \in \mathbb{N}^+ $, define $ {{\mathscr{P}}_h}\left( x \right): = {\mathscr{P}}\left( {x - h} \right) $ for $ h := 2\pi /\left( {3 \cdot {2^m}} \right) $. Then it can be verified that
  \[\mathop {\sup }\limits_{{2^m} \leqslant k < {2^{m + 1}}} | {{e^{ - ikh}} - 1} | \geqslant \mathop {\sup }\limits_{t \in [2\pi /3,4\pi /3)} | {{e^{ - it}} - 1} | = \mathop {\sup }\limits_{t \in [2\pi /3,4\pi /3)} \sqrt {2 - 2\cos t}  = \sqrt 3 .\]
  This leads to 
\begin{align}
  	 \sum\limits_{{2^m} \leqslant k < {2^{m + 1}}} {|{{\hat{\mathscr{P}}}_k}{|^2}}  &\leqslant \frac{1}{3}\sum\limits_{{2^m} \leqslant k < {2^{m + 1}}} {{{| {{e^{ - ikh}} - 1} |}^2}|{{\hat {\mathscr{P}}}_k}|^2}  \leqslant \frac{1}{3}\sum\limits_{k \in \mathbb{Z}} {{{| {{e^{ - ikh}} - 1} |}^2}|{{\hat {\mathscr{P}}}_k}|^2} \notag \\
  \label{DWHOLDER}	& = \frac{\pi }{3}\left\| {{{\mathscr{P}}_h} - {\mathscr{P}}} \right\|_{{L^2}\left( \mathbb{T} \right)}^2 \leqslant \frac{\pi }{3}\int_\mathbb{T} {{h^{2\alpha }}\left\| {\mathscr{P}} \right\|_{{C^\alpha }\left( \mathbb{T} \right)}^2dx}  \leqslant \frac{{{C_\alpha }}}{{{2^{2\alpha m}}}}\left\| {\mathscr{P}} \right\|_{{C^\alpha }\left( \mathbb{T} \right)}^2
  \end{align}
with some $ C_{\alpha}>0 $, where the H\"older continuity is used  in \eqref{DWHOLDER}. Then by Cauchy's inequality, we have
\[\sum\limits_{{2^m} \leqslant k < {2^{m + 1}}} {|{{\hat {\mathscr{P}}}_k}|}  \leqslant {\Bigg( {\sum\limits_{{2^m} \leqslant k < {2^{m + 1}}} {|{{\hat {\mathscr{P}}}_k}{|^2}} } \Bigg)^{1/2}}{\Bigg( {\sum\limits_{{2^m} \leqslant k < {2^{m + 1}}} {{1^2}} } \Bigg)^{1/2}} \leqslant C_\alpha ^{1/2}{\left\| {\mathscr{P}} \right\|_{{C^\alpha }\left( \mathbb{T} \right)}} \cdot {2^{\frac{{m+1}}{2} - \alpha m}}.\]
By summing up $ k $ with $ m \in \mathbb{N}^+ $ and utilizing $ \alpha>0 $, we obtain the absolute summable Fourier series of $ {\mathscr{P}} $, which is  converges pointwise  to $ {\mathscr{P}} $ on $ \mathbb{T} $. 

  For an individual $ {e^{ik\left( {\theta  + n\rho } \right)}} $ with $ k \in \mathbb{Z} $, the weighted average of the decaying wave admits exponential convergence in Theorem \ref{SWT1} with the decaying index $ \xi_k  = \xi \left( {\lambda ,k\rho } \right) \geqslant \xi \left( {\lambda ,0} \right)=\sqrt {2\lambda }  + {e^{ - 1}}\lambda: = \xi ' >0$. Therefore, by summing up $ k $ and utilizing $ \sum\nolimits_{ k \in \mathbb{Z}} {| {{{\hat {\mathscr{P}}}_k}} |}  <  + \infty  $, we prove the first claim in (I). One notices that for general $ \mathscr{P} $, we cannot obtain a more precise decaying index under this approach, as in Theorem  \ref{SWT1}, replacing $ \sin $ by $ \cos $ (the proof is completely same) and taking $ \theta=\rho=0 $, we arrive at the decaying index $ \xi' $ here.  Another reason is that as $ |k| $ increases, $ \{\|k \rho\|_\mathbb{T}\} _{k \in \mathbb{Z}}$ has a subsequence that tends to $ 0 $ (whether $ \rho $ is rational or irrational), and therefore it does not have a positive bound from below. However, for any trigonometric polynomial of order $ \ell $ on $ \mathbb{T} $ without the Fourier constant term, the decaying  index $ \xi' $ of the exponential rate could be improved to $ {\xi ^ * } = \sqrt {2\lambda }  + {e^{ - 1}}\sqrt {{\lambda ^2} + \mathop {\min }\nolimits_{1 \leqslant j \leqslant \ell } \left\| {j\rho } \right\|_\mathbb{T}^2}  $, in a similar way. To be more precise, for any trigonometric part,  the corresponding decaying index in Theorem \ref{SWT1} has a uniform lower bound, as $ \xi _k = \xi \left( {\lambda ,k\rho } \right) \geqslant {\xi ^ * } $ for all $ 1 \leqslant k \leqslant \ell  $.  \vspace{2mm}
 \\
 Proof of (II): As to (II), the analyticity of $ \mathscr{Q} $ yields $ \mathscr{Q}\left( x \right) = \sum\nolimits_{k \in \mathbb{N}} {\frac{{{D^k}\mathscr{Q}\left( 0 \right)}}{{k!}}{x^k}}  $ for all $ x \in [-1,1] $, and $ \sum\nolimits_{k = 1}^\infty  {\frac{{\left| {{D^k}\mathscr{Q}\left( 0 \right)} \right|}}{{k!}}}  <  + \infty  $. Note that for $ k \in \mathbb{N}^+ $, we have
\begin{equation}\label{DWSANJIAO}
	{\left( {{e^{ - \lambda n}}\sin \left( {\theta  + n\rho } \right)} \right)^k} = \frac{{{e^{ - \lambda kn}}}}{{{2^k}{i^k}}}\sum\limits_{j = 0}^k {{{\left( { - 1} \right)}^j}C_k^j\frac{{{e^{i\left( {k - 2j} \right)\left( {\theta  + n\rho } \right)}} + {{\left( { - 1} \right)}^k}{e^{ - i\left( {k - 2j} \right)\left( {\theta  + n\rho } \right)}}}}{2}}.
\end{equation}
Then it is evident to verify that the  summation of the coefficients of the trigonometric polynomial does not exceed $ \frac{1}{{{2^k}}}\sum\nolimits_{j = 0}^k {C_k^j}  = 1 $. Therefore, with $ \frac{1}{{{A_N}}}\sum\nolimits_{n = 0}^{N - 1} {w\left( {n/N} \right)}  = 1 $, the strict monotonicity of $ \xi=\xi(\lambda,\rho) $ with respect to $ \lambda $ and $ \|\rho\|_\mathbb{T} $ and Theorem \ref{SWT1}, we deduce the desired conclusion in (II) as follows:
\begin{align*}
\frac{1}{{{A_N}}}\sum\limits_{n = 0}^{N - 1} {w\left( {n/N} \right)\mathscr{Q}\left( {{e^{ - \lambda n}}\sin \left( {\theta  + n\rho } \right)} \right)}  &= \mathscr{Q}\left( 0 \right) + \sum\limits_{k = 1}^\infty  {\frac{{{D^k}\mathscr{Q}\left( 0 \right)}}{{k!}} \cdot \mathcal{O}\left( {\exp \left( { - \xi \left( {k\lambda ,k\rho } \right)\sqrt N } \right)} \right)} \\
&  = \mathscr{Q}\left( 0 \right) + \mathcal{O}\left( {\sum\limits_{k = 1}^\infty  {\frac{{\left| {{D^k}\mathscr{Q}\left( 0 \right)} \right|}}{{k!}}} } \right) \cdot \mathcal{O}\left( {\exp \left( { - \xi \left( {\lambda,0 } \right)\sqrt N } \right)} \right)\\
& = \mathscr{Q}\left( 0 \right) + \mathcal{O}\left( {\exp \left( { - \xi' \sqrt N } \right)} \right),
\end{align*}
where $ \xi'=\xi(\lambda,0) = \sqrt {2\lambda }  + {e^{ - 1}}\lambda >0 $. When considering  polynomials of order $ \nu $, i.e., $ \mathscr{Q}\left( x \right) = \sum\nolimits_{j = 0}^\nu  {{\mathscr{Q}_j}{x^j}}  $, it can be observed from the trigonometric part in  \eqref{DWSANJIAO} that for $ \sin(k\left( {\theta  + n\rho } \right)) $ with $ 0 \leqslant k \leqslant \nu $, the even case and the odd case are indeed different. For the even case, the lowest convergence rate is $ \mathcal{O}( {\exp ( { - \xi _ * ^{(1)}\sqrt N } )} ) $
with
\[\xi _ * ^{(1)} = \min \left\{ {\xi \left( {j\lambda ,0} \right) = \sqrt {2j\lambda }  + {e^{ - 1}}j\lambda :\; \text{for even $ j $ with $ 0 \leqslant j \leqslant \nu $ and $ {\mathscr{Q}_j} \ne 0 $}} \right\},\]
due to the existence of the constant term. While for the odd case where the constant term vanishes, we obtain the lowest convergence rate as $ \mathcal{O}( {\exp ( { - \xi _ * ^{(2)}\sqrt N } )} ) $,
provided with
\[\xi _ * ^{(2)} = \min \left\{ {\xi \left( {j\lambda ,j\rho } \right) = \sqrt {2j\lambda }  + {e^{ - 1}}\sqrt {{j^2}{\lambda ^2} + \left\| {j\rho } \right\|_\mathbb{T}^2} :\; \text{for odd $ j $ with $ 0 \leqslant j \leqslant \nu $ and $ {\mathscr{Q}_j} \ne 0 $}} \right\}.\]
Combining the above two convergence rates we complete the proof of (II).
   \vspace{2mm}
\\
  Proof of (III): The analysis is actually similar to (II), whenever we observe that
  \begin{align*}
  	 & \left| {\frac{1}{{{A_N}}}\sum\limits_{n = 0}^{N - 1} {w\left( {n/N} \right){\mathscr{K}}\left( {{e^{ - \lambda n}}\sin \left( {\theta  + n\rho } \right)} \right)}  - {\mathscr{K}}\left( 0 \right)} \right|\\
  	\leqslant&  \frac{M}{{{A_N}}}\sum\limits_{n = 0}^{N - 1} {w\left( {n/N} \right){e^{ - 2\lambda n}}{{\sin }^{2\tau }}\left( {\theta  + n\rho } \right)} ,
  \end{align*}
  which yields the convergence rate  $ \mathcal{O}( {\exp ( { - \xi _ * ^{*}\sqrt N } )} ) $ with $ {\xi _ *^* } = \xi \left( {2\tau \lambda ,0} \right) = 2\sqrt {\tau \lambda }  + 2{e^{ - 1}}\tau \lambda  > 0 $, as desired.

\section{Further discussions on general exponential weighting functions}\label{DWSEC4}
Historically, there have been various types of weighting functions for weighted Birkhoff  averages. For instance, Laskar \cite{MR1234445,MR1222935,MR1720890} utilized  $\sin^2(\pi x)$, and Das et al. \cite{MR3718733} as well as the authors \cite{TLCesaro}  employed $x(1-x)$  for comparative purposes (in relation to  \eqref{DWFUN}). It is also worth mentioning that very recently, Ruth and Bindel \cite{MR4833591} introduced a co-called ``tuned filter''  that possesses similar desirable properties to \eqref{DWFUN} in applications. However, some weighting functions may not possess the property of  being $C_0^\infty$ smoothness, which is crucial for achieving universal arbitrary polynomial or even exponential convergence. As a result, under such weighting functions, the weighted Birkhoff averages typically only exhibit convergence up to finite polynomial order. We do not intend to discuss them, but instead focus on some  analogues of \eqref{DWFUN} that have been verified in numerical simulations to exhibit significantly superior exponential acceleration effects; see  Das et al. \cite{MR3718733}, Duignan and Meiss  \cite{MR4582163}, and Calleja  et al. \cite{MR4665059}  on this aspect.

\subsection{Exponential weighting function with two regularity parameters $ p,q $}\label{DWPQZHISHU}
For $ p,q> 0 $, consider a general exponential weighting function $ {w}_{p,q}(x) $ on $ \mathbb{R} $ defined by
\[	{w}_{p,q}\left( x \right):= \left\{ \begin{array}{ll}
	{\left( {\int_0^1 {\exp \left( { - {s^{ - p}}{{\left( {1 - s} \right)}^{ - q}}} \right)ds} } \right)^{ - 1}}\exp \left( { - {x^{ - p}}{{\left( {1 - x} \right)}^{ - q}}} \right),&x \in \left( {0,1} \right),  \hfill \\
	0,&x \notin \left(0,1\right). \hfill \\
\end{array}  \right.\]
In particular, $ w(x)=w_{1,1}(x) $. We refer to $ p $ and $ q $ as \textit{regularity parameters}, because they characterize the asymptotic properties of the weighting function $ {w}_{p,q}(x) $ at the boundaries of the compact support.  It also   preserves the elementary properties of $ w(x) $, as $ w_{p,q}(x) \in C_0^\infty \left( {\left[ {0,1} \right]} \right) $ and $ \int_0^1 {w_{p,q}\left( x \right)dx} =1 $. Das et al. \cite{MR3718733} and Calleja  et al. \cite{MR4665059}	 utilized this weighting function with $ p=q=2 $ to find rotation numbers (the latter concerned  the spin-orbit problem with tidal torque), and observed the so-called super-convergence. For general $ p,q>0 $, it is evident to prove the arbitrary polynomial convergence via $  w_{p,q}(x) $ in weighted Birkhoff averages, however, it is \textit{non-trivial} to obtain the exponential convergence. The authors provided in \cite{MR4768308,TLCesaro,TLmultiple} a useful approach to achieve  the exponential convergence by estimating $ \| D^mw(x) \|_{L^1(0,1)} $ for sufficiently large $ m $, following the spirit of induction. Unfortunately, it becomes much more difficult when dealing with the weighting function $ w_{p,q}(x) $ for $ p,q\ne 1 $ from induction,   as integrating by parts multiple times will lead to more complicated parts. More importantly, according to this method, it is almost impossible to obtain  such precise convergence rate estimates in our paper (Theorems \ref{SWT1} and \ref{DWCORO2}). Consequently, there \textit{does not} exist any theoretical result that guarantees the exponential convergence of the weighted Birkhoff averages in \cite{MR3718733,MR4665059}.  It is therefore natural that ones should consider the following questions:

\begin{itemize}
\item [(Q1)] Do the weighed Birkhoff averages in \cite{MR4768308,TLCesaro,TLmultiple,MR3718733,MR4665059} via $ w_{p,q}(x) $ still admit the exponential convergence?

\item [(Q2)] How about the convergence rates of the weighted decaying waves via $ w_{p,q}(x) $?
\end{itemize}

The answer for question (Q1), is \textit{affirmative}. We first establish a quantitative lemma for $ w_{p,q}(x) $, as an extension of Lemma 5.3 in \cite{MR4768308}  concerning $ w(x)=w_{1,1}(x) $.  This approach is entirely distinct from the ones in \cite{MR3755876, MR3718733,MR4768308}, yet it proves to be highly effective. Subsequently, the theoretical exponential convergence discussed  in \cite{MR4768308,TLCesaro,TLmultiple} via $ w_{p,q}(x) $ can be obtained directly, by replacing $ \beta $ in Lemma 5.3 in \cite{MR4768308} with any number greater than $ 1 + \frac{1}{{\min \left\{ {p,q} \right\}}} $.  We also provide a more accurate theorem that exhibits nearly linear-exponential convergence, which we refer to as Theorem \ref{DWRYZS}. This theorem will be postponed to Section \ref{DWSECREVISITED}, as it requires additional preparations.

\begin{lemma}\label{DWLEMMA51}
For $ p,q>0 $, there exists some $ \tilde\lambda  = \tilde\lambda \left( {p,q} \right) > 1 $ such that for any $ m \in \mathbb{N}^+ $,
\[{\left\| {{D^m}{w_{p,q}}\left( x \right)} \right\|_{{L^1}\left( {0,1} \right)}} \leqslant {\tilde\lambda ^m}{m^{\left( {1 + \frac{1}{{\min \left\{ {p,q} \right\}}}} \right)m}}.\]	
\end{lemma}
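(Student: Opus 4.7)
The plan is to follow the template of Lemma \ref{SWLEMMA1} — extract a pointwise bound on $|D^m w_{p,q}(x)|$ via Cauchy's integral formula on a disc whose radius is calibrated to the distance to the boundary, then integrate — while carefully tracking how the exponents $p$ and $q$ propagate through each step.

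First I would fix $x\in(0,1)$ and choose $\delta = \lambda_1\min\{x,1-x\}$ with $\lambda_1 = \lambda_1(p,q) \in (0,1)$ small. On the complex disc $\partial B(x,\delta)$ the function $\zeta \mapsto \zeta^{-p}(1-\zeta)^{-q}$ (principal branch) is holomorphic, and a short computation shows that for $\lambda_1$ small enough, depending only on $p,q$,
\[
\operatorname{Re}\bigl(\zeta^{-p}(1-\zeta)^{-q}\bigr) \geqslant \tfrac{1}{2}\,x^{-p}(1-x)^{-q},\qquad \zeta\in\partial B(x,\delta),
\]
so that $|w_{p,q}(\zeta)| \leqslant C\exp(-\tfrac{1}{2}x^{-p}(1-x)^{-q})$ uniformly on the disc. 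Applying Cauchy's integral formula just as in the passage leading to \eqref{SWPQ1} then yields the pointwise estimate
\[
|D^m w_{p,q}(x)| \leqslant \frac{C\,m!}{\lambda_1^m}\,\min\{x,1-x\}^{-m}\exp\!\bigl(-\tfrac{1}{2}x^{-p}(1-x)^{-q}\bigr).
\]

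Next I would integrate, splitting $[0,1]$ at $1/2$. On $(0,1/2]$ one has $\min\{x,1-x\}=x$ and $(1-x)^{-q}\geqslant 1$, so after the substitution $u = 1/x$ the contribution is bounded by
\[
\frac{C\,m!}{\lambda_1^m}\int_0^{1/2}x^{-m}\exp\!\bigl(-\tfrac{1}{2}x^{-p}\bigr)\,dx \;\leqslant\; \frac{C\,m!}{\lambda_1^m}\int_0^{+\infty}u^{m-2}\exp\!\bigl(-\tfrac{1}{2}u^p\bigr)\,du.
\]
A further substitution $v = u^p/2$ reduces this to a Gamma integral of order $(m-1)/p$, which by Stirling's formula is of size $\bigl(C(p)\bigr)^{m}m^{m/p}$. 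Combined with $m! \sim \sqrt{2\pi m}\,m^m e^{-m}$, this yields a bound of the shape $\tilde\lambda^m m^{(1+1/p)m}$. The symmetric computation on $[1/2,1)$ produces $\tilde\lambda^m m^{(1+1/q)m}$, and taking the worse of the two exponents gives the claimed $\tilde\lambda^m m^{(1+1/\min\{p,q\})m}$ after absorbing constants into $\tilde\lambda = \tilde\lambda(p,q) > 1$.

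The main obstacle is the complex-analytic step: verifying uniformly in $x\in(0,1)$ that $\operatorname{Re}(\zeta^{-p}(1-\zeta)^{-q})$ remains comparable to $x^{-p}(1-x)^{-q}$ on $\partial B(x,\delta)$. For non-integer $p$ or $q$ this requires a careful branch choice and a first-order expansion of $(1+\delta e^{i\theta}/x)^{-p}(1-\delta e^{i\theta}/(1-x))^{-q}$, with both perturbation ratios bounded by $\lambda_1$; this is precisely where $\lambda_1(p,q)$ must be calibrated small enough that the imaginary part of the exponent stays bounded and the real part stays at least $1/2$ of its value at $\zeta=x$. Once this geometric lemma is in place, the reduction to a Gamma-function estimate and the Stirling bookkeeping are routine, and the passage from $p=q=1$ (already handled in Lemma \ref{SWLEMMA1}) to general $p,q>0$ goes through without change.
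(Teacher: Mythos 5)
Your proposal is correct and follows essentially the same route as the paper: Cauchy's integral formula on a disc of radius $\delta=\lambda_1\min\{x,1-x\}$ to get a pointwise bound on $|D^m w_{p,q}(x)|$, then integration, a change of variables reducing to $\Gamma\bigl((m-1)/p\bigr)$ (resp.\ with $q$), and Stirling's formula. The only cosmetic difference is that the paper's inequality \eqref{DMPQ} replaces both exponents by $\min\{p,q\}$ at the pointwise stage, whereas you keep $p$ and $q$ separate in the exponential, integrate each half of $[0,1]$ individually, and take the worse exponent at the end; the two give identical final bounds.
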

\begin{remark}
This lemma is also pivotal  in establishing  the nearly linear-exponential convergence as demonstrated in  Theorem \ref{DWRYZS}.
\end{remark}
\begin{proof}
Following a similar method in Lemma \ref{SWLEMMA1}, we have that for fixed $ m \in \mathbb{N}^+ $, there exists  some $ \tilde \lambda_1>0 $ {{(independent of $ x $)}} such that
\begin{equation}\label{DMPQ}
	\left| {{D^m}{w_{p,q}}\left( x \right)} \right| \leqslant \frac{{m!}}{{{\tilde\lambda_1 ^m}}}\max \left\{ {{x^{ - m}}\exp \left( { - \frac{1}{2}{x^{ - \min \left\{ {p,q} \right\}}}} \right),{{\left( {1 - x} \right)}^{ - m}}\exp \left( { - \frac{1}{2}{{\left( {1 - x} \right)}^{ - \min \left\{ {p,q} \right\}}}} \right)} \right\}.
\end{equation}
Then it follows from \eqref{DMPQ} and the symmetry that
\begin{align}
	{\left\| {{D^m}{w_{p,q}}\left( x \right)} \right\|_{{L^1(0,1)}}} &\leqslant \frac{{2m!}}{{\tilde\lambda _1^m}}\int_0^{\frac{1}{2}} {{x^{ - m}}\exp \left( { - \frac{1}{2}{x^{ - \min \left\{ {p,q} \right\}}}} \right)dx} \notag \\
	&  = \frac{{2m!}}{{\tilde\lambda _1^m}}\frac{{{2^{\frac{{m - 1}}{{\min \left\{ {p,q} \right\}}}}}}}{{\min \left\{ {p,q} \right\}}}\int_{{2^{\min \left\{ {p,q} \right\} - 1}}}^{ + \infty } {{y^{\frac{{m - 1}}{{\min \left\{ {p,q} \right\}}} - 1}}{e^{ - y}}dy} \notag \\
	\label{DWPQ2}& \leqslant \frac{2{\Gamma(m)}}{{\tilde\lambda _1^m}}\frac{{{2^{\frac{{m - 1}}{{\min \left\{ {p,q} \right\}}}}}}}{{\min \left\{ {p,q} \right\}}}\Gamma \left( {\frac{{m - 1}}{{\min \left\{ {p,q} \right\}}}} \right),
\end{align}
where $ \Gamma(x) $ is the standard Gamma function. In view of Stirling's approximation $ \Gamma \left( x \right) \sim \sqrt {2\pi } {x^{x - \frac{1}{2}}}{e^{ - x}}$ as  $x \to  + \infty $,
the right side of \eqref{DWPQ2} admits an equivalent form as
\begin{align*}
	&\frac{2{\sqrt {2\pi m} {m^m}{e^{ - m}}}}{{\lambda _1^m}}\frac{{{2^{\frac{{m - 1}}{{\min \left\{ {p,q} \right\}}}}}}}{{\min \left\{ {p,q} \right\}}}\sqrt {2\pi } {\left( {\frac{{m - 1}}{{\min \left\{ {p,q} \right\}}}} \right)^{\frac{{m - 1}}{{\min \left\{ {p,q} \right\}}} - \frac{1}{2}}}{e^{ - \frac{{m - 1}}{{\min \left\{ {p,q} \right\}}}}}\\
	= &{\tilde\lambda _3}\tilde\lambda _2^m{m^{\left( {1 + \frac{1}{{\min \left\{ {p,q} \right\}}}} \right)m - \frac{1}{{\min \left\{ {p,q} \right\}}}}},
\end{align*}
provided with
\[{\tilde \lambda _2} = \frac{1}{{{\tilde \lambda _1}\min \left\{ {p,q} \right\}}}{2^{\frac{1}{{\min \left\{ {p,q} \right\}}}}}{e^{ - \left( {1 + \frac{1}{{\min \left\{ {p,q} \right\}}}} \right)}},\;\;{\tilde \lambda _3} = \frac{{{2^{2 - \frac{1}{{\min \left\{ {p,q} \right\}}}}}\pi }}{{\min \left\{ {p,q} \right\}}}{\left( {\frac{1}{{\min \left\{ {p,q} \right\}}}} \right)^{ - \frac{1}{{\min \left\{ {p,q} \right\}}} - \frac{1}{2}}}.\]
Therefore, it is evident that there exists some $ \tilde\lambda  = \tilde\lambda \left( {p,q} \right) > 1 $ such that
\[{\left\| {{D^m}{w_{p,q}}\left( x \right)} \right\|_{{L^1}\left( {0,1} \right)}} \leqslant {\tilde\lambda ^m}{m^{\left( {1 + \frac{1}{{\min \left\{ {p,q} \right\}}}} \right)m}}.\]
\end{proof}

Regarding  question (Q2), it is somewhat interesting that ones could derive  the following theorem, at least for $ p,q\geqslant 1 $:

\begin{theorem}
For $ p,q\geqslant1 $, the quantitative results  in Theorems \ref{SWT1} and \ref{DWCORO2} remain valid when replacing the weighting function $ w(x) $ with $ w_{p,q}(x) $.
\end{theorem}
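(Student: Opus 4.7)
My plan is to replay the proofs of Theorems \ref{SWT1} and \ref{DWCORO2} with $w$ replaced by $w_{p,q}$. The only input that depends on the specific weight is Lemma \ref{SWLEMMA1}, so the task reduces to establishing an analogue of that lemma for $w_{p,q}$ with $p, q \geq 1$; the Poisson summation identity, the integration-by-parts chain (\ref{SWZJ1})-(\ref{SWZJ2}), the truncation argument of Lemma \ref{SWLEMMA2}, and the deduction of Theorem \ref{DWCORO2} carried out in Section \ref{DWSEC3} then all apply verbatim.

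For the $L^1$-norm estimate, I would begin by imitating the Cauchy integral argument of (\ref{SWPQ1}): choose $\delta = \tilde\lambda_1 \min\{x, 1-x\}$ with $\tilde\lambda_1 = \tilde\lambda_1(p,q) \in (0,1)$ so that $\sup_{s \in \partial B(x,\delta)}|w_{p,q}(s)|$ is controlled by $\max\{\exp(-\tfrac{1}{2}x^{-p}),\,\exp(-\tfrac{1}{2}(1-x)^{-q})\}$, yielding the sharpening
\[
|D^m w_{p,q}(x)| \leq \frac{m!}{\tilde\lambda_1^m}\max\Bigl\{x^{-m}\exp(-\tfrac{1}{2}x^{-p}),\,(1-x)^{-m}\exp(-\tfrac{1}{2}(1-x)^{-q})\Bigr\}
\]
of (\ref{DMPQ}) (the boundary exponentials are crucial and are thrown away in Lemma \ref{DWLEMMA51}). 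Since the right boundary contributes only $O(e^{-\lambda N})$ to $\|D^m w_{p,q}(y) e^{-\lambda N y}\|_{L^1(0,1)}$, after $u = y^{-1}$ everything reduces to bounding $\int_2^\infty u^{m-2}\exp(-\tfrac{1}{2}u^p - \lambda N u^{-1})\,du$.

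At this point the Cauchy-Schl\"omilch identity used in Lemma \ref{SWLEMMA1} is no longer available for $p > 1$. My plan is to combine the weighted AM-GM bound
\[
\tfrac{1}{2} u^p + \lambda N u^{-1} \geq (p+1)\,2^{-1/(p+1)}\, p^{-p/(p+1)}(\lambda N)^{p/(p+1)}
\]
with a Laplace-type expansion at the saddle point $u^\ast \sim (\lambda N /p)^{1/(p+1)}$, handling the polynomial factor $u^{m-2}$ by peeling off a small fraction of $\exp(-\tfrac{1}{2}u^p)$ via $\sup_u u^{m-2} e^{-\epsilon u^p}$. The output should take the form
\[
\|D^m w_{p,q}(y) e^{-\lambda N y}\|_{L^1(0,1)} \leq C \tilde\lambda_2^m\, m^{m} \exp\bigl(-c_p(\lambda N)^{p/(p+1)}\bigr)\, N^{\sigma_p(m)},
\]
with $\sigma_p(m)$ linear in $m$; for $p=1$ this reduces to Lemma \ref{SWLEMMA1}, and for $p>1$ it gives \emph{faster} boundary decay with the same $m^m$ growth.

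Feeding this estimate into the structure of (\ref{SWZong1}) and re-running Lemma \ref{SWLEMMA2}, optimization of $\mathcal{Q}_n$ over $m$ produces an exponential rate $\exp(-\xi_{p,q}\,N^{p/(p+1)})$. The hypothesis $p, q \geq 1$ is exactly what guarantees $p/(p+1) \geq 1/2$ and $1 + 1/p \leq 2$, ensuring both that the resulting rate dominates $\exp(-\xi\sqrt{N})$ of Theorem \ref{SWT1} and that the Stirling factor does not overwhelm the $N$-powers in the denominator of $\mathcal{Q}_n$; for $p = q = 1$, $\xi_{p,q}$ recovers the sharp $\xi(\lambda,\rho)$ of Theorem \ref{SWT1}. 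The conclusions of Theorem \ref{DWCORO2} then follow by the same Fourier/Bernstein and Taylor arguments of Section \ref{DWSEC3}, with $\xi$ replaced by $\xi_{p,q}$. The main obstacle I anticipate is this saddle-point analysis: the Cauchy-Schl\"omilch trick produces a remarkably clean closed form that must be replaced by careful $m$-uniform asymptotics, and the polynomial corrections have to be tracked precisely enough that the subsequent optimization preserves the sharp exponent.
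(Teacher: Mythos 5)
The reduction you identify is exactly right: the only place the specific weight enters the proofs of Theorems \ref{SWT1} and \ref{DWCORO2} is Lemma \ref{SWLEMMA1}, and your Cauchy-integral derivation of the pointwise bound $|D^m w_{p,q}(x)| \leq \tilde\lambda_1^{-m}\, m! \max\{x^{-m}e^{-\frac{1}{2}x^{-p}}, (1-x)^{-m}e^{-\frac{1}{2}(1-x)^{-q}}\}$ is also what the paper uses (it is \eqref{DMPQ}, recorded in the proof of Lemma \ref{DWLEMMA51}). From there, however, the paper takes a far shorter route than your saddle-point program: since $p, q \geq 1$ and the relevant integration variable satisfies $0 < y < \tfrac{1}{2} < 1$, one simply has $y^{\min\{p,q\}} \leq y$, hence $\exp(-\tfrac{1}{2}y^{-\min\{p,q\}}) \leq \exp(-\tfrac{1}{2}y^{-1})$, and therefore
\[
\int_0^{1/2} y^{-m}\exp\Bigl(-\tfrac{1}{2}y^{-\min\{p,q\}} - \lambda N y\Bigr)\,dy \;\leq\; \int_0^{1/2} y^{-m}\exp\Bigl(-\tfrac{1}{2}y^{-1} - \lambda N y\Bigr)\,dy.
\]
This is exactly the integral in \eqref{SWJIFEN1}, so Lemma \ref{SWLEMMA1} holds verbatim for $w_{p,q}$ (with $\lambda_1$ replaced by $\tilde\lambda_1$), and everything downstream goes through unchanged. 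No replacement of Cauchy--Schl\"omilch is needed.

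Your AM--GM/Laplace plan would, if executed with full care, also prove the theorem — and might yield a nominally faster boundary factor $\exp(-c_p(\lambda N)^{p/(p+1)})$ — but it does more than the statement asks and carries real risk. In particular, the composite exponential rate is not governed by the boundary factor alone: the second half of $\xi(\lambda,\rho)$, the term $e^{-1}\sqrt{\lambda^2 + \|\rho\|_{\mathbb T}^2}$, comes from the optimization over $m$ of $\mathcal{Q}_n \sim m^m/\bigl(N^{m/2}(\lambda^2 + (\rho-2\pi n)^2)^{m/2}\bigr)$ in Lemma \ref{SWLEMMA2}, and if your $L^1$ bound carries a different power $N^{\sigma_p(m)}$ you would have to re-run that optimization before asserting a clean $\exp(-\xi_{p,q}N^{p/(p+1)})$; the $\sqrt{N}$ coming from the rotating part does not automatically speed up with $p$. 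The monotone comparison $y^{\min\{p,q\}} \leq y$ sidesteps all of this, is the decisive observation, and is the \emph{only} place the hypothesis $p, q \geq 1$ is used (for $\min\{p,q\} < 1$ the inequality reverses and the stated rate genuinely fails).
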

\begin{proof}
Recalling the proof of Lemma \ref{SWLEMMA1} and utilizing \eqref{DMPQ}, we have 
\begin{align*}
{\left\| {\left| {{D^m}{w_{p,q}}\left( y \right)} \right|{e^{ - \lambda Ny}}} \right\|_{{L^1}\left( {0,1} \right)}} &\leqslant \frac{{2m!}}{{\tilde \lambda _1^m}}\int_0^{\frac{1}{2}} {{y^{ - m}}\exp \left( { - \frac{1}{2}{y^{ - \min \left\{ {p,q} \right\}}} - \lambda Ny} \right)dy} \\
& \leqslant \frac{{2m!}}{{\tilde \lambda _1^m}}\int_0^{\frac{1}{2}} {{y^{ - m}}\exp \left( { - \frac{1}{2}{y^{ - 1}} - \lambda Ny} \right)dy} 
\end{align*}
due to $ y \geqslant {y^{\min \left\{ {p,q} \right\}}} $ on $ (0,1/2) $.
Therefore, \eqref{SWJIFEN1} holds whenever we replace by $ \lambda_1 $ with $ \tilde \lambda _1 $. Consequently,  Lemma \ref{SWLEMMA1} continues to be applicable to  $ w_{p,q}(x) $, and Theorems \ref{SWT1} and \ref{DWCORO2} are also the case.
\end{proof}

\subsection{Exponential weighting function with a width parameter $ \gamma $}
Finally, we would like to mention the weighting function utilized  by Duignan and Meiss in \cite{MR4582163}, namely a various version of $ w(x) $ with a  \textit{width parameter} $ \gamma>0 $:
\[	\tilde{w}_\gamma\left( x \right):= \left\{ \begin{array}{ll}
	{\left( {\int_0^1 {\exp \left( { - \gamma{s^{ - 1}}{{\left( {1 - s} \right)}^{ - 1}}} \right)ds} } \right)^{ - 1}}\exp \left( { - \gamma{x^{ - 1}}{{\left( {1 - x} \right)}^{ - 1}}} \right),&x \in \left( {0,1} \right),  \hfill \\
	0,&x \notin \left(0,1\right). \hfill \\
\end{array}  \right.\]
They compared numerically the impact of different width parameters $ \gamma $ on the convergence rate for the two-wave Hamiltonian system (e.g., to illustrate the converse KAM method that detects the breakup of tori \cite{MR4198485,MR4322369}), and they observed that, for different cases (such as varying orbits), it is difficult to obtain a uniform optimal $ \gamma $ to accelerate the convergence rate. Therefore, it is necessary to theoretically provide quantitative exponential convergence rates for all $ \gamma $. It should be noted that even in the analysis of weighted Birkhoff averages, estimating the convergence rate of the weighting function $ \tilde w_\gamma (x)$ is somewhat difficult in the previous techniques (see \cite{MR4768308} for instance), let alone \textit{quantitatively}.

Here we do not intend to delve into detailed discussions, but from the results, the difference with $ w(x) $ lies in, for example, the parameter $ \xi= \sqrt {2\lambda }  + {e^{ - 1}}\sqrt {{\lambda ^2} + \left\| \rho  \right\|_{\mathbb{T}}^2}  >0 $ in the exponential convergence rate of Theorem \ref{SWT1} being replaced by $ \xi_\gamma = \sqrt {2\gamma\lambda }  + {e^{ - 1}}\sqrt {{\lambda ^2} + \left\| \rho  \right\|_{\mathbb{T}}^2}  >0 $ (noting only that the exponential part in \eqref{SWJIFEN1} in Lemma \ref{SWLEMMA1}  becomes $ \exp \left( { - \gamma u/2 - \lambda N{u^{ - 1}}} \right) $, which yields the exponential part $ \exp \left( { - \sqrt {2\gamma \lambda N} } \right) $ in \eqref{DWLEMMA21} with the weighting function $ w_\gamma(x) $). 

It is worth emphasizing that increasing $ \gamma $ cannot make practical computations faster \textit{indefinitely}, as the control coefficient for the exponential convergence will \textit{inevitably} tend to infinity, leading to lack of uniformity (otherwise, when $ \gamma \to +\infty $, the weighted average tends to $ 0 $ rather than the spatial average). This is consistent with the views of Duignan and Meiss in \cite{MR4582163}. However, for a given system, we can provide a feasible approach to the unsolved problem of finding the optimal width parameter $ \gamma $ in \cite{MR4582163}, by quantitatively estimating the exponential convergence rate (calculating the control coefficient quantitatively as well), thereby choosing the optimal $ \gamma $ to \textit{minimize} the upper bound of the  error.

{{We end this section by mentioning the ``Planck-taper'' window function which appears to have attracted recent attention (see, for instance, Dudal et al. \cite{MR4703229}). By utilizing the ideas presented in this paper, one can establish the corresponding version of Lemma \ref{DWLEMMA51}. This allows us to obtain some quantitative results similar to Theorems \ref{SWT1}, \ref{DWCORO2} and \ref{DWJQ} for this weighted average.}}

\section{Further applicability: the continuous case and the general decaying case}\label{DWSEC5}
\subsection{The continuous case}
We first would like to emphasize that all results in this paper can be extended to the continuous case, i.e., in a integral form. The proofs become simpler, as we do not need to employ the Poisson summation formula, and thus do not require the elaborate summation estimates as in Lemma \ref{SWLEMMA2}. Additionally, it is worth mentioning that the uniform exponential convergence rates obtained will be faster in certain cases, as in those cases, the  $ \|\rho\|_\mathbb{T} $ term in the conclusions (see Theorem \ref{SWT1} for instance) will be replaced by $ |\rho| $ (note that $ |\rho| >\|\rho\|_\mathbb{T} $ for all $ |\rho|>2\pi $), eliminating the need for the Poisson summation formula. Another distinction in terms of the conclusions between the continuous and discrete cases is the $ \mathcal{O}^{\#}(T^{-1}) $ polynomial convergence condition for the unweighted type. Define in a similar way,
\[{{\rm{DW}}_T^{\rm con}}\left( {\lambda ,\rho,\theta  } \right): = \frac{1}{T}\int_0^T {{e^{ - \lambda t}}\sin \left( {\theta  + \rho t} \right)dt} , \]
as well as the $ w $-weighted type, 
\[{\rm{WDW}}_T^{\rm con}\left( {\lambda ,\rho ,\theta } \right): = \frac{1}{T}\int_0^T {w\left( {t/T} \right){e^{ - \lambda t}}\sin \left( {\theta  + \rho t} \right)dt} .\]
Then, $ {\rm{WDW}}_T^{\rm con}\left( {\lambda ,\rho ,\theta } \right) $ admits uniform exponential convergence as  mentioned previously. However, through  simple calculations, we can prove that $ \left| {{{\rm{DW}}_T^{\rm con}}\left( {\lambda ,\rho,\theta  } \right)} \right| = {\mathcal{O}^\# }\left( {{T^{ - 1}}} \right)$ as $T \to  + \infty $     if $ \lambda \sin \theta  + \rho \cos \theta  \ne 0 $, which differs  from (II) in Theorem \ref{SWT1}. To enable readers to understand the aforementioned analysis more clearly, we present the following  theorem as a continuous form of Theorem \ref{SWT1} and provide a brief outline of its proof.

\begin{theorem}\label{DWLXFJDL}
(I) For any fixed $ \lambda  > 0 $ and $ \rho  \in \mathbb{R} $, 
\[\left| {\rm{WDW}}_T^{\rm con}\left( {\lambda ,\rho ,\theta } \right) \right| = \mathcal{O}\left( {\exp \left( { - {\xi _{\rm{con}}}\sqrt T } \right)} \right),\;\;T \to  + \infty \]
uniformly holds with $ \theta  \in \mathbb{R} $, where $ \xi_{\rm{con}}=\xi_{\rm{con}}(\lambda,\rho) = \sqrt {2\lambda }  + {e^{ - 1}}\sqrt {{\lambda ^2} + | \rho |^2}  >0 $.  In particular, the control coefficient is independent of $ \rho $, and is uniformly bounded for large $ \lambda $.
\\
(II) However, for any fixed $ \lambda  > 0 $ and $ \rho,\theta  \in \mathbb{R} $,  there exists infinitely many $ N \in \mathbb{N}^+ $ such that 
\[\left| {{{\rm{DW}}_T^{\rm con}}\left( {\lambda ,\rho,\theta  } \right)} \right| = {\mathcal{O}^\# }\left( {{T^{ - 1}}} \right),\;\;T \to  + \infty,\]
whenever $ \lambda \sin \theta  + \rho \cos \theta  \ne 0 $.
\end{theorem}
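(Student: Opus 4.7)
The plan is to follow the strategy of Theorem \ref{SWT1}(I) stripped of the Poisson summation machinery, since we are integrating rather than summing. For Part (I), I would first write, after the substitution $y=t/T$,
\[
{\rm{WDW}}_T^{\rm con}(\lambda,\rho,\theta) = \operatorname{Im}\left\{e^{i\theta}\int_0^1 w(y)\,e^{T(-\lambda+i\rho)y}\,dy\right\},
\]
which already decouples the phase $\theta$. I would then integrate by parts $m$ times to shift the oscillatory factor onto $D^m w$:
\[
\int_0^1 w(y)\,e^{T(-\lambda+i\rho)y}\,dy = \frac{\int_0^1 D^m w(y)\,e^{T(-\lambda+i\rho)y}\,dy}{\bigl(T(-\lambda+i\rho)\bigr)^m},
\]
and bound the numerator via Lemma \ref{SWLEMMA1}, obtaining a control of the form $C_1\lambda_2^m m^m \exp(-\sqrt{2\lambda T})\,T^{(m-1)/2}/\bigl(T^m(\lambda^2+\rho^2)^{m/2}\bigr)$.

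Next, I would optimize over $m$ exactly as in Lemma \ref{SWLEMMA2}, but without the truncation step: choosing $m\sim e^{-1}\lambda_2^{-1}\sqrt{T(\lambda^2+\rho^2)}$ minimizes the $m$-dependent factor and produces the bound $\mathcal{O}\bigl(\exp(-e^{-1}\sqrt{\lambda^2+\rho^2}\sqrt{T})\bigr)$. Combined with the $\exp(-\sqrt{2\lambda T})$ factor inherited from Lemma \ref{SWLEMMA1}, this delivers the target exponential rate with index $\xi_{\rm{con}} = \sqrt{2\lambda}+e^{-1}\sqrt{\lambda^2+|\rho|^2}$. The key simplification relative to the discrete case is precisely that there is no Fourier-mode sum over $n\in\mathbb{Z}$, so $\|\rho\|_{\mathbb{T}}$ is replaced by $|\rho|$ itself, which may be arbitrarily large; uniformity in $\theta$ is automatic from the very first step, and boundedness of the control constant for large $\lambda$ follows exactly as in the discrete proof.

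For Part (II), the computation is explicit. Evaluating the elementary integral gives
\[
{\rm{DW}}_T^{\rm con}(\lambda,\rho,\theta) = \frac{1}{T}\operatorname{Im}\left\{e^{i\theta}\cdot\frac{1-e^{(-\lambda+i\rho)T}}{\lambda-i\rho}\right\},
\]
and rationalizing the denominator $(\lambda-i\rho)^{-1}=(\lambda+i\rho)/(\lambda^2+\rho^2)$ yields
\[
{\rm{DW}}_T^{\rm con}(\lambda,\rho,\theta) = \frac{\lambda\sin\theta+\rho\cos\theta}{(\lambda^2+\rho^2)\,T}+\mathcal{O}\!\left(\frac{e^{-\lambda T}}{T}\right),
\]
which is $\mathcal{O}^{\#}(T^{-1})$ exactly when $\lambda\sin\theta+\rho\cos\theta\neq 0$. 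The non-vanishing condition is cleaner than in the discrete case because the denominator no longer reflects a geometric sum.

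I do not anticipate any genuine obstacle: the heavy lifting—Lemma \ref{SWLEMMA1}—is already established, and bypassing the Poisson formula makes the continuous argument strictly shorter than that for Theorem \ref{SWT1}. The one point warranting care is verifying that the optimal $m\in\mathbb{N}^+$ lies in the admissible regime for Lemma \ref{SWLEMMA1} as $T\to+\infty$, and that the final constant remains uniform in $\theta$ and $\rho$ while being bounded for large $\lambda$; these properties follow directly by inspecting the dependence in the discrete analysis.
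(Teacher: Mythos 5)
Your proposal is correct and follows essentially the same route the paper takes: substitute $y=t/T$ to decouple $\theta$, integrate by parts $m$ times, apply Lemma \ref{SWLEMMA1} with $N$ replaced by $T$, and optimize $m\sim\lambda_3^{-1}e^{-1}\sqrt{T}$ exactly as in Lemma \ref{SWLEMMA2} but without the $\Lambda_1/\Lambda_2$ truncation (which is unnecessary since there is no Poisson sum over $n$), while Part (II) is the direct elementary evaluation of the integral. The paper's own proof is only a sketch that points to this same segment of the discrete argument, so your fleshed-out version matches it.
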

\begin{proof}
We only show the strategy of (I).  Unlike the discrete case, which requires the application of the Poisson summation formula, we only need to perform integration by parts on the weighted integral $ {\rm{WDW}}_T^{\rm con}\left( {\lambda ,\rho ,\theta } \right) $, with variable integration time denoted as $ m $. Therefore, this indeed corresponds to a segment of the discrete case; specifically, it does not involve $ 2 \pi i n $ within \eqref{SWZJ2}.    Consequently, fundamentally this resembles Lemma \ref{SWLEMMA2} without necessitating partitioning of summations; one simply needs to select $ m \sim \lambda_3^{-1}{e^{-1}}\sqrt{N}  $ as $ N \to +\infty  $ to complete the  proof of (I).
\end{proof}

Next, we further discuss analogues of Theorem \ref{DWCORO2}. Similarly, in the continuous case, as long as the observable belongs to $ A(\mathbb{T}) $, the corresponding time average along the weighted decaying wave admits uniform exponential convergence. This is in stark contrast to the unweighted Birkhoff integrals. See 	Forni \cite{MR1888794} for instance, given an irrational rotation 
$ \mathscr{T}_\rho^t:\theta\to \theta+t\rho \mod 1$ in each coordinate on $ \mathbb{T}^2 $ observed by an observable $ f $ with bounded variation, the upper bound for \textit{all $ \theta \in \mathbb{T}^2 $} of the difference between the time average $ {T^{ - 1}}\int_0^T {f\left( {\mathscr{T}_\rho ^t\left( \theta  \right)} \right)dt}   $ and the spatial average $ \int_\mathbb{T} {f\left( x \right)dx}  $ can often only be estimated as 
$ \mathcal{O}\left( {{T^{ - 1}}\ln T{{\left( {\ln \ln T} \right)}^{1+\alpha} }} \right) $ using the Denjoy-Koksma inequality and the metric theory of continued fractions, where $ \alpha>0 $ is arbitrarily fixed. See also Dolgopyat and Fayad \cite{MR4179835} for a  more precise description of this aspect. This bound includes an additional logarithmic term beyond the optimal 
$ \mathcal{O}^{\#}(T^{-1}) $ convergence rate achievable in sufficiently smooth cases, through a Fourier argument based on the co-homological equation.  Consequently, our results are somewhat \textit{unexpected}, as we achieve uniform exponential convergence from observables with very weak regularity.
\subsection{The general decaying case}

Finally, let us discuss {{weighted averages}} along general decaying waves. It should be noted that the approaches in this paper \textit{are not limited to} weighted averages of exponentially decaying waves, but also extend to more general decay, such as the polynomial decay, exponential plus polynomial decay,  super-exponential decay, etc. Specifically, the exponential plus polynomial decay, exemplified by $e^{-\lambda x} x^\iota\sin (\rho x)$ where $\lambda >0$ and $\iota, \rho \in \mathbb{R}$, has a distinctive background as elucidated in the Introduction; it serves as a fundamental solution of the linear differential equation $x' = Ax $, where  the constant matrix $A$ only admits eigenvalues within the unit circle $ \mathbb{S}^1 $--or equivalently, with negative maximal Lyapunov exponents,  and the degree $\iota$ of the polynomial   part $x^\iota$ depends on the multiplicity of the eigenvalues of $A$. Furthermore, the techniques developed in this paper are applicable to  {{weighted averages}} along  trajectories of certain \textit{nonlinear} systems that can be \textit{smoothly conjugated}  to linear ones, e.g., 
\[x' = Ax +f(x)\;\;\text{and}\;\;x_{n+1} =Ax_n+f(x_n),\]
although in such cases, one must  investigate the regularity of the conjugations.  If only arbitrary polynomial convergence rates are desired when the conjugation is $ C^\infty $, the analysis will be straightforward--indeed, a slight modification of \cite{MR3755876, MR3718733,MR4768308,TLCesaro,TLmultiple} or the approaches presented in this paper can achieve the desired conclusion; however, for the \textit{quantitative} exponential convergence rates as considered in this paper (which will change form for more general decay), some essential estimates need to be established as in Lemma \ref{SWLEMMA1}. In order to highlight our approaches more clearly, we prefer to focus on the exponentially decaying waves in this paper, without delving into the details of these more general cases, as they do not differ significantly in terms of conceptual framework. For this reason, we only present the following \textit{qualitative} exponential convergence theorem with a brief outline of the proof.
\setcounter{footnote}{0}
\renewcommand{\thefootnote}{\fnsymbol{footnote}}
\begin{theorem}
(I) Consider the continuous nonlinear dynamical   system
\begin{equation}\label{DWF1}
	x' = F\left( x \right),\;\;  x \in \mathbb{R}^{{d}},\;\; {{d \in \mathbb{N}^+,}} 
\end{equation}
 where $ F:{\mathbb{R}^{{d}}} \to {\mathbb{R}^{{d}}} $ satisfies $ F\left( O \right) = O $,   and all eigenvalues of $ DF\left( O \right) $ lie inside the unit circle $ \mathbb{S}^1 $. Assume there exists a neighbourhood $ \Omega  $ of $ O $ and a  conjugation $ \Phi  \in {C^2}( {\overline \Omega  } ) $ such that $ \Phi \left( O \right) = O $, $ \det D\Phi \left( O \right) \ne 0 $, and that $ \Phi  \circ x\left( t \right) = y\left( t \right) \circ \Phi  $, where $ y $ is the flow of $ y' = DF\left( O \right)y $ with the same initial of \eqref{DWF1}. Then for any flow $ {x\left( {t,{x_0}} \right)} $ starting from the initial point $x_0 \in \Omega $, the  {{weighted averages}}
\[\frac{1}{{{A_N}}}\sum\limits_{n = 0}^{N - 1} {w\left( {n/N} \right)x\left( {n,{x_0}} \right)} ,\;\;\frac{1}{T}\int_0^T {w\left( {t/T} \right)x\left( {t,{x_0}} \right)dt} \]
converge uniformly (with respect to $ x_0 $) and exponentially\footnote{{{Although the eigenvalues may be multiple in this case, by making some concessions in the exponential rate (to eliminate the polynomial part), one can obtain a quantitative exponential rate similar to that in (II). For the sake of brevity, we only provide here a qualitative description.}}}  to $ O $.
\\
(II) Consider the discrete nonlinear dynamical system
\[{x_{n + 1}} = F\left( {{x_n}} \right),\;\;  x \in \mathbb{R}^{{d}},\;\; {{d \in \mathbb{N}^+,}}\]
 where $ F:{\mathbb{R}^{{d}}} \to {\mathbb{R}^{{d}}} $ satisfies $ F\left( O \right) = O $,   and all eigenvalues $ {s _j} $ ($ j=1,\ldots,{{d}} $) of $ DF\left( O \right) $ are simple and lie inside the unit circle $ \mathbb{S}^1 $. Moreover, $ F $ is locally $ C^\infty $, and the following nonresonant condition holds:
\[{s_j} \ne s_1^{{m_1}} \cdots s_{{d}}^{{m_{{d}}}},\;\;1 \leqslant j \leqslant {{d}},\;\; \left( {{m_1}, \ldots ,{m_{{d}}}} \right) \in {\mathbb{N}^{{d}}},\;\;\sum\nolimits_{i = 1}^{{d}} {{m_i}}  \geqslant 2.\]
   Then there exists a neighbourhood $ \Omega  $ of $ O $, such that for any iterated sequence $ {\left\{ {{x_n}} \right\}_{n \in \mathbb{N}}}$ starting from the initial point $x_0 \in \Omega $, the   {{weighted average}}
 \[\frac{1}{{{A_N}}}\sum\limits_{n = 0}^{N - 1} {w\left( {n/N} \right){x_n}} \]
 converges uniformly (with respect to $ x_0 $) and exponentially  to $ O $. In particular, the convergence rate is $ \mathcal{O}(\exp ( - {\xi _A}\sqrt N )) $, where $ {\xi _A} = \sqrt {2{{\min }_{1 \leqslant j \leqslant {{d}}}}{s_j}}  + {e^{ - 1}}{\min _{1 \leqslant j \leqslant {{d}}}}{s_j}>0 $.
\end{theorem}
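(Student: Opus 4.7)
The plan is to conjugate the nonlinear dynamics to its linearization and then reduce each coordinate to a (superposition of) decaying waves, to which the quantitative results already established in this paper, namely Theorems \ref{SWT1}, \ref{DWCORO2} and \ref{DWLXFJDL}, directly apply.

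For part (I), I would start from the conjugation identity $\Phi(x(t,x_0)) = y(t,\Phi(x_0))$, which together with $\det D\Phi(O) \neq 0$ gives $x(t,x_0) = \Phi^{-1}(e^{tA}\Phi(x_0))$ with $A := DF(O)$ on a neighborhood of $O$, and a first-order Taylor expansion $\Phi^{-1}(z) = D\Phi^{-1}(O)\,z + R(z)$ with $|R(z)| \leqslant C|z|^2$ uniformly on $\overline{\Omega}$ since $\Phi \in C^2$. The linear part $D\Phi^{-1}(O)\,e^{tA}\Phi(x_0)$ decomposes in the Jordan basis into a finite linear combination (with coefficients continuous in $x_0$) of elementary waves of the form $e^{-\lambda_j t} t^{\iota} \cos(\rho_j t + \varphi)$ and $e^{-\lambda_j t} t^\iota \sin(\rho_j t + \varphi)$ with $\lambda_j > 0$; for simple eigenvalues Theorem \ref{DWLXFJDL} applies verbatim, while for Jordan blocks one absorbs the $t^\iota$ factor at the cost of an arbitrarily small loss in the decay rate via the elementary inequality $t^\iota e^{-\lambda t} \leqslant C_{\iota,\varepsilon} e^{-(\lambda-\varepsilon)t}$, consistent with the footnote. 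The remainder $R(e^{tA}\Phi(x_0))$ satisfies $|R| \leqslant C' e^{-2\lambda_0 t}$ with $\lambda_0 := \min_j \lambda_j > 0$, and its weighted integral is dominated by that of $e^{-2\lambda_0 t}$, to which Theorem \ref{DWLXFJDL} with $\rho = 0$ applies. The discrete weighted Birkhoff average is treated identically using Theorem \ref{SWT1}. Uniformity in $x_0 \in \Omega$ follows from uniformity of all constants on $\overline{\Omega}$.

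For part (II), the multiplicative Poincar\'e nonresonance, hyperbolicity $|s_j|<1$ and local $C^\infty$ regularity place us squarely in the Poincar\'e domain, so Sternberg's (or Poincar\'e's) linearization theorem yields a local $C^\infty$ diffeomorphism $\Phi$ with $\Phi \circ F = L \circ \Phi$, $L := DF(O)$. Hence $x_n = \Phi^{-1}(L^n \Phi(x_0))$, and in the eigenbasis $L^n \Phi(x_0) = \sum_j c_j(x_0)\, s_j^n\, v_j$ with $s_j^n = e^{-n\lambda_j} e^{in\theta_j}$, $\lambda_j = -\ln|s_j| > 0$. Expanding $\Phi^{-1}$ as a locally convergent power series, each multi-monomial of multi-degree $\alpha$ becomes a product of $|\alpha|$ such waves and therefore falls exactly into the scope of Theorem \ref{DWCORO2}(II): its weighted Birkhoff average admits the bound $\mathcal{O}(\exp(-\xi(|\alpha|\lambda_{\min},\cdot)\sqrt N))$ with $\lambda_{\min} := \min_j \lambda_j$. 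The linear terms ($|\alpha|=1$) dictate the slowest rate $\xi_A = \sqrt{2\lambda_{\min}} + e^{-1}\lambda_{\min}$, while all higher-order terms ($|\alpha|\geqslant 2$) decay strictly faster; summing against the absolutely convergent Taylor coefficients of $\Phi^{-1}$ and using uniform bounds on $c_j(x_0)$ on $\overline{\Omega}$ yields the claimed uniform rate.

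The main obstacle will be the bookkeeping needed in part (II) to interchange the weighted Birkhoff sum with the Taylor series for $\Phi^{-1}$ while retaining a \emph{single} uniform exponential rate $\xi_A$; this requires absolute summability of the per-monomial exponential bounds against the Taylor coefficients, for which analyticity of the linearization in the Poincar\'e domain is the natural framework. In part (I), the delicate point is the Jordan-block case, where as noted in the footnote one must concede an arbitrarily small loss in the rate, so only a \emph{qualitative} exponential statement is asserted. Ensuring that $\Phi$, $\Phi^{-1}$ and all associated constants are uniform on the (possibly shrunk) neighborhood $\Omega$ is the routine but essential ingredient for uniformity of the final convergence in $x_0$.
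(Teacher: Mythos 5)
Your overall strategy --- conjugate the nonlinear map to its linearization, identify the coordinates of the conjugated orbit as exponentially decaying waves, then invoke Theorems \ref{SWT1} and \ref{DWCORO2} and sum --- is exactly the paper's approach, and your treatment of part (I) (including absorbing the Jordan polynomial factor at the cost of an $\varepsilon$-loss in the exponent, and pushing the $C^2$ Taylor remainder into the faster-decaying $e^{-2\lambda_0 t}$ class handled by the $\rho=0$ case) is also in line with what the paper does.

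The one genuine gap is in part (II), where you expand $\Phi^{-1}$ ``as a locally convergent power series'' and lean on ``analyticity of the linearization in the Poincar\'e domain.'' The theorem only assumes $F$ is locally $C^\infty$. Poincar\'e's theorem yields an \emph{analytic} linearizing conjugation only when $F$ itself is analytic; for merely $C^\infty$ $F$ with the Sternberg nonresonance condition, Sternberg's theorem produces a $C^\infty$ (more precisely, $C^k$ up to the order permitted by the nonresonances, here all orders) diffeomorphism $\Phi$, with no reason for $\Phi^{-1}$ to have a convergent power series. So the step where you interchange the weighted sum with an infinite Taylor series of $\Phi^{-1}$ is not justified under the stated hypotheses.

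The fix --- and what the paper actually does --- is to stop the Taylor expansion at finite order. Writing $\Phi(x)=Ax+Bx^2+o(|x|^2)$ (with $A=\mathbb{I}$ after normalizing $D\Phi(O)$) and correspondingly $\Phi^{-1}(y)=A^{-1}y+A^{-1}B\tilde A\,y^2+o(|y|^2)$, the paper splits
\[
\frac{1}{A_N}\sum_{n=0}^{N-1}w(n/N)x_n
= \frac{1}{A_N}\sum_{n=0}^{N-1}w(n/N)\,A^{-1}\Lambda^n\Phi(x_0)
\;+\;\text{(remainder)},
\]
where the remainder is bounded pointwise by $K\,|\Lambda^n\Phi(x_0)|^2 \leqslant K' e^{-2\lambda_{\min}n}$ (the exponent $2n$ in the paper's displayed bound is a typo for $2$), and applies Theorem~\ref{DWCORO2}(III) to it. Only the linear term dictates the stated rate $\xi_A$, and the quadratic-plus-$o$ remainder decays strictly faster, so nothing is lost. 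Your per-monomial rate bookkeeping and the observation that $\xi(|\alpha|\lambda_{\min},\cdot)\geqslant\xi(\lambda_{\min},\cdot)$ is correct as far as it goes; it is just unnecessary, and in the $C^\infty$-only setting it cannot be pushed to all orders simultaneously without the unwarranted analyticity. Replacing the infinite series with a second-order truncation and a single remainder bound closes the gap and matches the paper.
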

\begin{proof}
We only prove (II), as the analysis for (I) is similar. For convenience, we only consider the case where $ {{d}}=2 $. By utilizing Sternberg's linearization theorem \cite{MR0096853,MR0096854}, we obtain a local $ C^2 $ diffeomorphism $ \Phi  $ such that $ \Phi  \circ F = \Lambda  \circ \Phi  $, with $ \Phi \left( O \right) = O $ and $ D\Phi \left( O \right) = \mathbb{I} $. Then it is evident that for any $ x_0 $ sufficiently close to $ O $, it holds
\begin{align*}
	{x_n}& = F\left( {{x_{n - 1}}} \right) = {\Phi ^{ - 1}} \circ \Lambda  \circ \Phi \left( {{x_{n - 1}}} \right) = {\Phi ^{ - 1}} \circ \Lambda  \circ \Phi  \circ F\left( {{x_{n - 2}}} \right)\\
	& = {\Phi ^{ - 1}} \circ \Lambda  \circ \Phi  \circ {\Phi ^{ - 1}} \circ \Lambda  \circ \Phi \left( {{x_{n - 2}}} \right) =  \cdots  = {\Phi ^{ - 1}}{\Lambda ^n}\Phi \left( {{x_0}} \right).
\end{align*}
Generally,  we set $ \Phi \left( x \right) = Ax + B{x^2} + o( {{{\left| x \right|}^2}} ) $ with $ x: = {\left( {{x_1},{x_2}} \right)^\top} $ and $ {x^2}: = {\left( {x_1^2,{x_1}{x_2},x_2^2} \right)^\top} $ (indeed, $ A=\mathbb{I} $ in the discrete case here, but it may vary in the continuous case), then one observes that $ {\Phi ^{ - 1}}\left( x \right) = {A^{ - 1}}x + {A^{ - 1}}B\tilde A{x^2} + o( {{{\left| x \right|}^2}} ) $ (this is actually a generalized version of the inverse Lagrange theorem), where 
\[A: = \left( {\begin{array}{*{20}{c}}
		{{A_{11}}}&{{A_{12}}} \\ 
		{{A_{21}}}&{{A_{22}}} 
\end{array}} \right),\;\;{A^{ - 1}}: = \left( {\begin{array}{*{20}{c}}
		{{a_{11}}}&{{a_{12}}} \\ 
		{{a_{21}}}&{{a_{22}}} 
\end{array}} \right),\;\;B: = \left( {\begin{array}{*{20}{c}}
		{{B_{11}}}&{{B_{12}}}&{{B_{13}}} \\ 
		{{B_{21}}}&{{B_{22}}}&{{B_{23}}} 
\end{array}} \right),\]
and
\[\tilde A: = \left( {\begin{array}{*{20}{c}}
		{a_{11}^2}&{2{a_{11}}{a_{12}}}&{a_{22}^2} \\ 
		{{a_{11}}{a_{21}}}&{{a_{11}}{a_{22}} + {a_{12}}{a_{21}}}&{{a_{12}}{a_{22}}} \\ 
		{a_{21}^2}&{2{a_{21}}{a_{22}}}&{a_{22}^2} 
\end{array}} \right).\]
This leads to 
\[\left| {\frac{1}{{{A_N}}}\sum\limits_{n = 0}^{N - 1} {w\left( {n/N} \right){x_n}}  - \frac{1}{{{A_N}}}\sum\limits_{n = 0}^{N - 1} {w\left( {n/N} \right){A^{ - 1}}{\Lambda ^n}\Phi \left( {{x_0}} \right)} } \right| \leqslant \frac{K}{{{A_N}}}\sum\limits_{n = 0}^{N - 1} {w\left( {n/N} \right){{\left| {{\Lambda ^n}\Phi \left( {{x_0}} \right)} \right|}^{2n}}} \]
for some universal $ K>0$. Both $ \frac{1}{{{A_N}}}\sum\nolimits_{n = 0}^{N - 1} {w\left( {n/N} \right){A^{ - 1}}{\Lambda ^n}\Phi \left( {{x_0}} \right)}  $ and $ \frac{K}{{{A_N}}}\sum\nolimits_{n = 0}^{N - 1} {w\left( {n/N} \right){{\left| {{\Lambda ^n}\Phi \left( {{x_0}} \right)} \right|}^{2n}}} $ converge uniformly (with respect to $ x_0 $) and exponentially  to $ O $, as the components of $ {{A^{ - 1}}{\Lambda ^n}\Phi \left( {{x_0}} \right)} $ and $ {{{\left| {{\Lambda ^n}\Phi \left( {{x_0}} \right)} \right|}^{2n}}} $ are exponentially decaying waves which we have discussed in Theorems \ref{SWT1} and \ref{DWCORO2}. To be more precise, from the nonresonant condition for eigenvalues, we have  $ s_j \ne 0$ for $ 1\leqslant j \leqslant {{d}}$. Consequently, the smallest decaying parameter of such waves is $ {{{\min }_{1 \leqslant j \leqslant {{d}}}}{s_j}} $, which leads to the $ \mathcal{O}(\exp ( - {\xi _A}\sqrt N )) $ exponential convergence of $ \frac{1}{{{A_N}}}\sum\nolimits_{n = 0}^{N - 1} {w\left( {n/N} \right){x_n}} $ with $ {\xi _A} = \sqrt {2{{\min }_{1 \leqslant j \leqslant {{d}}}}{s_j}}  + {e^{ - 1}}{\min _{1 \leqslant j \leqslant {{d}}}}{s_j}>0 $,  similar to the proof of (III) in Theorem \ref{DWCORO2}.   As for the continuous case, $ {{A^{ - 1}}{\Lambda ^n}\Phi \left( {{x_0}} \right)} $ may be more complicated as $ \Lambda  $ may contain Jordan blocks, however, the components of $ {{A^{ - 1}}{\Lambda ^n}\Phi \left( {{x_0}} \right)} $ are decaying waves  with  exponential plus polynomial decay, hence the weighted   average along it still converges exponentially since the polynomial parts could be well dominated by the exponential parts in the proof. 
\end{proof}

We end this section by emphasizing the followings: (i) Both two cases in (I) and (II) could be extended to the smoothly-observed case as discussed in Theorem \ref{DWCORO2}; (ii) the existence of the conjugation in (I) can be guaranteed by the classical Hartman-Grobman theorem, but the regularity is generally at most of H\"older type below $ C^1 $, regardless of how high the regularity of $ F $ is, as detailed in Arnold's book \cite{MR1162307}; (iii) The regularity of $ F $ in (II) allows for a finitely differentiable type depending on the nonresonant conditions of the eigenvalues, see Sternberg \cite{MR0096853,MR0096854}. 
For a more explicit exposition on Sternberg's result, we  would also like to mention the work of Zhang et al. \cite{MR3632558}.

\section{Exponential acceleration in analytic  quasi-periodic dynamical systems revisited}\label{DWSECREVISITED}
This section is mainly devoted to improve a previous \textit{qualitative} result obtained by the authors in \cite{MR4768308} to a \textit{quantitative} one. Consider a quasi-periodic weighted Birkhoff average
\[\frac{1}{{{A_N}}}\sum\limits_{n = 0}^{N - 1} {w\left( {n/N} \right)f\left( {{\mathscr{T}^n_\rho}\left(\theta\right)} \right)} \]
 as that in \eqref{DWWBA}, where the $ d $-torus is modified to $ {\mathbb{T}^d}: = {\mathbb{R}^d}/{\mathbb{Z}^d} $ with $ d \in \mathbb{N}^+ $ for brevity, the initial point $ \theta \in \mathbb{T}^d $, the quasi-periodic mapping is specified  by $ {\mathscr{T}_\rho }\left( \theta  \right) = \theta  + \rho \bmod 1 $ with the nonresonant rotation vector $ \rho \in \mathbb{T}^d $, and $ f $ is a smooth observable on $ \mathbb{T}^d $. One of the most important (which we say universal) results from \cite{MR4768308}, Corollary 3.1,  states that if $ f $ is \textit{analytic}, then for \textit{almost all} (in the sense of full Lebesgue measure) rotation vectors $ \rho $, the weighted Birkhoff error term
\begin{equation}\label{DWERROR}
	{\bf Error}_{N}: = \left|\frac{1}{{{A_N}}}\sum\limits_{n = 0}^{N - 1} {w\left( {n/N} \right)f\left( {{\mathscr{T}^n_\rho}\left(\theta\right)} \right)} - \int_{{\mathbb{T}^d}} {f\left( x \right)dx}\right| 
\end{equation}
exhibits  \textit{qualitative} uniform (with respect to $ \theta $) exponential convergence $ \mathcal{O}\left( {\exp \left( { - c{N^\vartheta }} \right)} \right) $, where $ c, \vartheta >0$  are certain universal constants. It is worth noting that, for \textit{qualitative} considerations, \cite{MR4768308} provides a strict lower bound for $ \vartheta $ as $ 1/\left( {d+12 } \right) $, although this is not explicitly stated there. Our motivation in the forthcoming Theorem \ref{DWJQ} is to refine this bound and demonstrate   a variety of representative  scenarios,  utilizing  the innovative  techniques developed throughout this paper. We emphasize that if one is only interested in arbitrary polynomial convergence, one can follow the ideas presented in Das and Yorke \cite{MR3755876} (or Das et al. \cite{MR3718733}), making the complicated approaches employed here entirely unnecessary. The main difficulty lies in properly dealing with  small divisors to \textit{quantify}  the exponential convergence, which still remains  \textit{unexplored}.
\setcounter{footnote}{0}
\renewcommand{\thefootnote}{\fnsymbol{footnote}}
\begin{theorem}\label{DWJQ}
	Consider the weighted Birkhoff error term $ {\bf Error}_{N} $ in \eqref{DWERROR}.
	\begin{itemize}
\item[(I)] If $ f $ is analytic, then for almost all rotations $ \rho $ and all $ \zeta>1 $, we have 
\[{\bf Error}_{N}= \mathcal{O}\left( {\exp \left( { - {c_{\rm I}}{N^{\frac{1}{{d + 2}}}}{{\left( {\ln N} \right)}^{ - \frac{\zeta}{{d + 2}}}}} \right)} \right),\]
 where $ c_{\rm I} >0$ is some universal constant.
		
\item[(II)] If $ f $ is analytic, then for   rotations $ \rho $ with Diophantine exponent of $ d $\footnote{It is well known that such rotations  only form a set of zero Lebesgue measure in $ \mathbb{R}^d $.}, namely
\[\left| {k \cdot \rho  - n} \right| > \frac{C}{{{{\left| k \right|}^d}}},\;\;C > 0,\;\;\forall 0 \ne k \in {\mathbb{Z}^d},\;\;\forall n \in \mathbb{Z},\]
we have
\[{\bf Error}_{N}=\mathcal{O}\left( {\exp \left( { - {c_{\rm II}}{N^{\frac{1}{{d + 2}}}}} \right)} \right),\]
 where $ c_{\rm II} >0$ is some universal constant.
\item[(III)]   If $ f $ is a finite trigonometric polynomial, then for almost all rotations,  we have 
\[ {\bf Error}_{N}=\mathcal{O}\left( {\exp \left( { - {c_{\rm III}}\sqrt N } \right)} \right),\]
where $ c_{\rm III} >0$ is some universal constant.
	\end{itemize}
\end{theorem}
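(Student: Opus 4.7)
\textbf{Proof proposal for Theorem \ref{DWJQ}.} The plan is to reduce all three parts to a single per-mode estimate, then sum via a cutoff argument tuned to the arithmetic of $\rho$. Since $f$ is analytic on $\mathbb{T}^d=\mathbb{R}^d/\mathbb{Z}^d$, expand $f(\theta)=\sum_{k\in\mathbb{Z}^d}\hat f_k\,e^{2\pi i k\cdot\theta}$ with $|\hat f_k|\leqslant M e^{-\sigma|k|}$ for some $\sigma,M>0$. The $k=0$ mode cancels the spatial average, and swapping summation reduces the problem to bounding, uniformly in the initial phase $\theta$,
\[{\bf Error}_N\leqslant\sum_{k\neq 0}|\hat f_k|\,\left|\frac{1}{A_N}\sum_{n=0}^{N-1}w(n/N)e^{2\pi i n k\cdot\rho}\right|,\]
where the inner single-frequency bound must be quantitative in both $N$ and the small divisor $\|k\cdot\rho\|:=\mathrm{dist}(k\cdot\rho,\mathbb{Z})$.

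For the per-mode estimate we follow the proof of Theorem \ref{SWT1}(I) with $\lambda=0$ and scalar frequency $k\cdot\rho$: apply the Poisson summation formula, integrate by parts $m$ times, and invoke Lemma \ref{DWLEMMA51} at $p=q=1$ to control $\|D^m w\|_{L^1(0,1)}\leqslant\tilde\lambda^m m^{2m}$. Each Poisson summand is then dominated by $\bigl(\tilde\lambda m^2/(2\pi N|k\cdot\rho-j|)\bigr)^m$, and the saddle-point choice $m\asymp e^{-1}\sqrt{2\pi N|k\cdot\rho-j|/\tilde\lambda}$ yields the per-term bound $\exp(-c_0\sqrt{N|k\cdot\rho-j|})$ with an absolute $c_0>0$. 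The nearest-integer $j=j^*$ dominates while the remaining $j$ produce a geometrically summable tail, so that
\[\left|\frac{1}{A_N}\sum_{n=0}^{N-1}w(n/N)e^{2\pi i n k\cdot\rho}\right|\leqslant C\exp\bigl(-c_0\sqrt{N\,\|k\cdot\rho\|}\bigr),\]
with $C,c_0>0$ independent of $k$ and $\theta$.

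Substituting into the Fourier bound and splitting the sum over $k$ at a cutoff $|k|=K$, the tail $|k|>K$ contributes $\lesssim e^{-\sigma K/2}$ by analyticity, while the head $|k|\leqslant K$ contributes $\lesssim K^d\exp\bigl(-c_0\sqrt N\min_{0\neq|k|\leqslant K}\sqrt{\|k\cdot\rho\|}\bigr)$. In case (II) the Diophantine hypothesis gives $\|k\cdot\rho\|\geqslant C|k|^{-d}$, so balancing $\sigma K\sim c_0\sqrt{N}\,K^{-d/2}$ yields the cutoff $K\sim N^{1/(d+2)}$ and the rate $\mathcal{O}(\exp(-c_{\rm II} N^{1/(d+2)}))$. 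In case (I), a Borel--Cantelli--Khinchin--Groshev argument shows that for any fixed $\zeta>1$ and almost every $\rho$, $\|k\cdot\rho\|\geqslant C(\rho,\zeta)|k|^{-d}(\log(1+|k|))^{-\zeta}$ for all $0\neq k\in\mathbb{Z}^d$ (since $\sum_{k\neq 0}|k|^{-d}(\log|k|)^{-\zeta}<\infty$ for such $\zeta$); the analogous balance then produces the cutoff $K\sim N^{1/(d+2)}(\log N)^{-\zeta/(d+2)}$ and the asserted rate. Case (III) is immediate: only finitely many modes contribute, $\min_k\|k\cdot\rho\|>0$ for almost every $\rho$, and the per-mode estimate directly yields $\mathcal{O}(\exp(-c_{\rm III}\sqrt N))$.

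The main obstacle is ensuring that the saddle-point choice of $m$ in the integration-by-parts step delivers the per-mode bound with $c_0$ genuinely \emph{independent} of $k$, since only then does the cutoff balance produce the sharp exponents $1/(d+2)$, improving the qualitative index $1/(d+12)$ implicit in \cite{MR4768308}. This uniformity rests crucially on the sharp estimate $\|D^m w\|_{L^1}\leqslant\tilde\lambda^m m^{2m}$ from Lemma \ref{DWLEMMA51}, which in turn depends on the Cauchy-integral and Stirling argument underlying Lemma \ref{SWLEMMA1} rather than the cruder induction-based bounds used in earlier work; pairing this with the logarithmic refinement of the almost-sure Diophantine estimate is what promotes the result from qualitative to quantitative.
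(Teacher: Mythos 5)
Your argument is correct and matches the paper's proof in all essentials: Fourier expansion of the analytic observable, Poisson summation plus iterated integration by parts controlled by Lemma \ref{DWLEMMA51} at $p=q=1$, truncation at $|k|=K$, and the balance between the analyticity tail and the small-divisor contribution using Herman's almost-sure estimate $\|k\cdot\rho\|\gtrsim |k|^{-d}(\ln(1+|k|))^{-\zeta}$ in (I), the given Diophantine bound in (II), and the positive minimum over the finite spectrum in (III). The only organizational difference is that you first record a per-mode estimate $\bigl|A_N^{-1}\sum_{n}w(n/N)e^{2\pi i n k\cdot\rho}\bigr|\lesssim \exp\bigl(-c_0\sqrt{N\|k\cdot\rho\|}\bigr)$ by optimizing the number of integrations by parts separately for each $k$, whereas the paper chooses a single $m_N$ tuned to the worst small divisor over the whole truncated range $|k|<\mathcal{K}(N)$; both routes are equivalent and yield the same exponent $N^{1/(d+2)}$ (up to the $(\ln N)^{-\zeta/(d+2)}$ correction in case (I)).
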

\begin{proof}
To begin, we provide a thorough demonstration for (I), serving as the foundation for the proofs of (II) to (III).

Denote by $ \mathbb{T}_r^d: = \left\{ {z = u + iv:u \in {\mathbb{T}^d},\left| v \right| \leqslant r} \right\} $ the thickened torus, and define the norm $ {\left\| f \right\|_r}: = \mathop {\sup }\nolimits_{\left| v \right| \leqslant r} {\left( {\int_{{\mathbb{T}^d}} {{{\left| {f\left( {u + iv} \right)} \right|}^2}du} } \right)^{\frac{1}{2}}} $. Then,  it is well known that if $ f $ is analytic,  its Fourier coefficients satisfy $ |{{\hat f}_k}| \leqslant {\left\| f \right\|_r}{e^{ - 2\pi r\left| k \right|}} $ for all $ { k \in {\mathbb{Z}^d}} $, where $ r>0 $ is the analytic radius of $ f $, see Salamon \cite{MR2111297} for instance. Next, for any fixed $ \zeta>1 $, we establish a nonresonant condition from Herman \cite{MR0538680}  that is satisfied by almost all rotations $ \rho \in \mathbb{R}^d $ (as $ \alpha $ varies):
\begin{equation}\label{DWALMOST}
	\left| {k \cdot \rho  - n} \right| > \frac{\alpha }{{{{\left| k \right|}^{d }}{{\ln }^\zeta }\left( {1 + \left| k \right|} \right)}},\;\;\alpha  > 0,\;\;\forall 0 \ne k \in {\mathbb{Z}^d},\;\;\forall n \in \mathbb{Z}.
\end{equation}
Below we estimate the Birkhoff error term $ {\bf Error}_{N} $ in (I).

 Note $ \int_{{\mathbb{T}^d}} {f\left( x \right)dx}  = \frac{1}{{{A_N}}}\sum\nolimits_{n = 0}^{N - 1} {w\left( {n/N} \right){{\hat f}_0}}  $. Then it follows that 
\begin{align}
\frac{1}{{{A_N}}}\sum\limits_{n = 0}^{N - 1} {w\left( {n/N} \right)f\left( {\mathscr{T}_\rho ^n\left( \theta  \right)} \right)}  - \int_{{\mathbb{T}^d}} {f\left( x \right)dx}  &=\sum\limits_{0 \ne k \in {\mathbb{Z}^d}} {{{\hat f}_k}{e^{2\pi ik\cdot\theta }}\frac{1}{{{A_N}}}\sum\limits_{n = 0}^{N - 1} {w\left( {n/N} \right){e^{2\pi ink \cdot \rho }}} } \notag \\
& = \sum\limits_{0 \ne k \in {\mathbb{Z}^d},\left| k \right| < \mathcal{K}\left( N \right)} {{{\hat f}_k}{e^{2\pi ik\cdot\theta }}\frac{1}{{{A_N}}}\sum\limits_{n = 0}^{N - 1} {w\left( {n/N} \right){e^{2\pi ink \cdot \rho }}} } \notag \\
& + \sum\limits_{0 \ne k \in {\mathbb{Z}^d},\left| k \right| \geqslant \mathcal{K}\left( N \right)} {{{\hat f}_k}{e^{2\pi ik\cdot\theta }}\frac{1}{{{A_N}}}\sum\limits_{n = 0}^{N - 1} {w\left( {n/N} \right){e^{2\pi ink \cdot \rho }}} } \notag \\
\label{DWFENDUAN}: &= {\mathscr{S}_{\rm T}} + {\mathscr{S}_{\rm R}},
\end{align}
where the truncated order $ \mathcal{K}\left( N \right) $ will be specified later. 

For the truncated term $ {\mathscr{S}_{\rm T}} $, we first estimate $  \left| {\frac{1}{{{A_N}}}\sum\nolimits_{n = 0}^{N - 1} {w\left( {n/N} \right){e^{2\pi ink \cdot \rho }}} } \right|$. With the Poisson summation formula, we obtain
\begin{align}
\left|\frac{1}{{{A_N}}}\sum\limits_{n = 0}^{N - 1} {w\left( {n/N} \right){e^{2\pi ink \cdot \rho }}} \right|		& = \frac{1}{{{A_N}}}\left|\sum\limits_{n =  - \infty }^\infty  {\int_{ - \infty }^{ + \infty } {w\left( {s/N} \right){e^{2\pi i\left( {k \cdot \rho  - n} \right)s}}ds} }\right| \notag \\
	&\leqslant \frac{N}{{{A_N}}}\sum\limits_{n =  - \infty }^\infty  \left|{\int_0^1 {w\left( z \right){e^{2\pi Ni\left( {k \cdot \rho  - n} \right)z}}dz} }\right| \notag \\
\label{DWIIIII}	& = \frac{N}{{{A_N}}}\left( \left|{\int_0^1 {w\left( z \right){e^{2\pi Ni\left( {k \cdot \rho  - {n_k}} \right)z}}dz} }\right| + \sum\limits_{n \ne {n_k}} \left|{\int_0^1 {w\left( z \right){e^{2\pi Ni\left( {k \cdot \rho  - n} \right)z}}dz} }\right|  \right), 
\end{align}
where $ {n_k}: = {\inf _{n \in \mathbb{Z}}}\left| {k \cdot \rho  - n} \right| $ for fixed $ k $ and $ \rho $. For any truncated order $ 2\leqslant  \mathcal{K}\left( N \right) = \mathcal{O}( {{N^{\frac{1}{{d+1}}}}} ) $ with $\mathop {\lim }\nolimits_{N \to  + \infty }  \mathcal{K}(N)= +\infty $, {{set the number of integrations by parts $ m_N $}} as
\[{m_N} \sim \frac{1}{e}{\left( {\frac{{2\pi \alpha N}}{{\tilde \lambda \left( {1,1} \right)\mathcal{K}{{\left( N \right)}^{d }}{{\ln }^\zeta }\left( {1 + \mathcal{K}\left( N \right)} \right)}}} \right)^{\frac{1}{2}}}\geqslant 2 , \]
where $ \tilde{\lambda}(1,1) $ is the constant given in Lemma \ref{DWLEMMA51}. Then we have

\begin{align}
	\left| {\int_0^1 {w\left( z \right){e^{2\pi Ni\left( {k \cdot \rho  - {n_k}} \right)z}}dz} } \right| & \leqslant \frac{{{{\left\| {{D^{{m_N}}}w} \right\|}_{{L^1}\left( {0,1} \right)}}}}{{{{\left( {2\pi N\left| {k \cdot \rho  - {n_k}} \right|} \right)}^{{m_N}}}}}\notag \\
\label{DWS11}&  \leqslant {\left( {\frac{{\tilde \lambda \left( {1,1} \right){{\left| k \right|}^{d }}{{\ln }^\zeta }\left( {1 + \left| k \right|} \right)m_N^2}}{{2\pi \alpha N}}} \right)^{{m_N}}} \\
&  \leqslant {\left( {\frac{{\tilde \lambda \left( {1,1} \right)\mathcal{K}{{\left( N \right)}^{d}}{{\ln }^\zeta }\left( {1 + \mathcal{K}\left( N \right)} \right)m_N^2}}{{2\pi \alpha N}}} \right)^{{m_N}}}\notag \\
\label{DWIIII}	& = \mathcal{O}\left( {\exp \left( { - 2e^{-1}\sqrt {\frac{{2\pi \alpha N}}{{\tilde \lambda \left( {1,1} \right)\mathcal{K}{{\left( N \right)}^{d }}{{\ln }^\zeta }\left( {1 + \mathcal{K}\left( N \right)} \right)}}} } \right)} \right),
\end{align}
and similarly, 
\begin{align}
{\sum\limits_{n \ne {n_k}} \left|{\int_0^1 {w\left( z \right){e^{2\pi Ni\left( {k \cdot \rho  - n} \right)z}}dz} }\right| } &\leqslant \sum\limits_{n \ne {n_k}} {\frac{{{{\left\| {{D^{{m_N}}}w} \right\|}_{{L^1}\left( {0,1} \right)}}}}{{{{\left( {2\pi N\left| {k \cdot \rho  - n} \right|} \right)}^{{m_N}}}}}} \notag \\
&  \leqslant \frac{{2{{\left\| {{D^{{m_N}}}w} \right\|}_{{L^1}\left( {0,1} \right)}}}}{{{{\left( {\pi N} \right)}^{{m_N}}}}} + \frac{{{{\left\| {{D^{{m_N}}}w} \right\|}_{{L^1}\left( {0,1} \right)}}}}{{{{\left( {2\pi N} \right)}^{{m_N}}}}}\sum\limits_{n \ne {n_k},{n_k} \pm 1} {\frac{1}{{{{\left| {k \cdot \rho  - n} \right|}^{{m_N}}}}}}  \notag \\
\label{DWIIIIII}& = \mathcal{O}\left( {\exp \left( { - 2e^{-1}\sqrt {\frac{{2\pi \alpha N}}{{\tilde \lambda \left( {1,1} \right)\mathcal{K}{{\left( N \right)}^{d }}{{\ln }^\zeta }\left( {1 + \mathcal{K}\left( N \right)} \right)}}} } \right)} \right), 
\end{align}
because $ \left| {k \cdot \rho  - \left( {{n_k} \pm 1} \right)} \right| \geqslant 1/2 $,   $ {\left\| {{D^{{m_N}}}w} \right\|_{{L^1}\left( {0,1} \right)}}/{\left( {\pi N} \right)^{{m_N}}} $ can be dominated by \eqref{DWS11}, and 
\[\sum\limits_{n \ne {n_k},{n_k} \pm 1} {\frac{1}{{{{\left| {k \cdot \rho  - n} \right|}^{{m_N}}}}}}  \leqslant \sum\limits_{n \ne {n_k},{n_k} \pm 1} {\frac{1}{{{{\left| {k \cdot \rho  - n} \right|}^2}}}}  \leqslant 2\sum\limits_{n = 0}^\infty  {\frac{1}{{{{\left( {n + 1/2} \right)}^2}}}}  <  + \infty. \]
Combining \eqref{DWIIII}, \eqref{DWIIIII}, \eqref{DWIIIIII} and utilizing $ \sum\nolimits_{0 \ne k \in {\mathbb{Z}^d},\left| k \right| < \mathcal{K}\left( N \right)} {| {{{\hat f}_k}{e^{2\pi ik \cdot \theta }}} |}  \leqslant \sum\nolimits_{0 \ne k \in {\mathbb{Z}^d}} {| {{{\hat f}_k}} |}  <  + \infty  $,  $\mathop {\sup }\nolimits_{N \geqslant 3} N/{A_N} = \mathop {\sup }\nolimits_{N \geqslant 3} {\left( {{N^{ - 1}}\sum\nolimits_{n = 0}^{N - 1} {w\left( {n/N} \right)} } \right)^{ - 1}} <  + \infty  $, we arrive at  the estimate for the  truncated term $ {\mathscr{S}_{\rm T}} $ as
\begin{equation}\label{DWTRUNC}
	\left|{\mathscr{S}_{\rm T}}\right| = \mathcal{O}\left( {\exp \left( { - 2e^{-1}\sqrt {\frac{{2\pi \alpha N}}{{\tilde \lambda \left( {1,1} \right)\mathcal{K}{{\left( N \right)}^{d }}{{\ln }^\zeta }\left( {1 + \mathcal{K}\left( N \right)} \right)}}} } \right)} \right).
\end{equation}

As for the remainder term $ \mathscr{S}_{\rm R} $, it is evident that 
\begin{align}
\left| {{\mathscr{S}_{\rm R}}} \right| &\leqslant \sum\limits_{0 \ne k \in {\mathbb{Z}^d},\left| k \right| \geqslant \mathcal{K}\left( N \right)} {|{{\hat f}_k}|\frac{1}{{{A_N}}}\sum\limits_{n = 0}^{N - 1} {w\left( {n/N} \right)} } \notag \\
& = \sum\limits_{0 \ne k \in {\mathbb{Z}^d},\left| k \right|\geqslant \mathcal{K}\left( N \right)} {|{{\hat f}_k}|} \notag \\
& \leqslant \sum\limits_{0 \ne k \in {\mathbb{Z}^d},\left| k \right| \geqslant \mathcal{K}\left( N \right)} {{{\left\| f \right\|}_r}{e^{ - 2\pi r\left| k \right|}}} \notag \\
\label{DWIIIIIII}& = \mathcal{O}\left( {\exp \left( { - r'\mathcal{K}\left( N \right)} \right)} \right), 
\end{align}
provided with any $ 0 < r' < 2\pi r $.

Finally, by choosing the truncated order as 
\begin{equation}\label{DWcishu}
	\mathcal{K}\left( N \right) = {N^{\frac{1}{{d + 2}}}}{\left( {\ln N} \right)^{ - \frac{\zeta }{{d + 2}}}} ,
\end{equation}
we have 
\[\mathcal{K}\left( N \right) = {\mathcal{O}^\# }\left( {2e^{-1}\sqrt {\frac{{2\pi \alpha N}}{{\tilde \lambda \left( {1,1} \right)\mathcal{K}{{\left( N \right)}^{d }}{{\ln }^\zeta }\left( {1 + \mathcal{K}\left( N \right)} \right)}}} } \right),\]
which leads to 
\[	\left|{\mathscr{S}_{\rm T}}\right|+\left| {{\mathscr{S}_{\rm R}}} \right|=\mathcal{O}\left( {\exp \left( { - {c_{\rm I}}{N^{\frac{1}{{d + 2}}}}{{\left( {\ln N} \right)}^{ - \frac{\zeta}{{d + 2}}}}} \right)} \right)\]
with some universal constant $ {c_{\rm I}}>0 $, and so does the  Birkhoff error term $ {\bf Error}_{N} $ by recalling  \eqref{DWFENDUAN}. This proves (I).

The proofs of (II) and (III) only require minor modifications to the proof of (I). It is noted that the nonresonant condition in (II) corresponds to the case where $ \zeta = 0  $ in \eqref{DWALMOST}, and indeed, it does not affect the subsequent analysis in (I) as $ f $ is still analytic. Therefore, the Birkhoff error term $ {\bf Error}_{N} $  in (II)  admits an upper bound as $ \mathcal{O}( {\exp ( { - {c_{\rm II}}{N^{\frac{1}{{d + 2}}}}} )} ) $,
where $ c_{\rm II} >0$ is some universal constant. Regarding  (III), which specifically considers that $ f $ is merely  a finite trigonometric polynomial and is applied for  almost all rotations,  the truncation technique in (I) is unnecessary.  In other words, $ \mathcal{K}(N)=\mathcal{O}(1) $, $ m_N = \mathcal{O}^\#(\sqrt N) $, and the reminder term $ \mathscr{S}_{\rm R} $ does not exist. Consequently, the estimate for the truncated term $ \mathscr{S}_{\rm T} $ in \eqref{DWTRUNC} directly yields $ {\bf Error}_{N}=\mathcal{O}( {\exp ( { - {c_{\rm III}}\sqrt N } )} ) $ in (III) with some universal constant $ c_{\rm III} >0$.  
\end{proof}

Building directly on Theorem \ref{DWJQ}, we employ the crucial Lemma \ref{DWLEMMA51} (detailed in Section \ref{DWPQZHISHU}) on the weighting function $ w_{p,q}(x) $ to establish the following result of so-called nearly linear-exponential convergence.  We prefer to present this finding in this section, rather than as a primary theorem, due to our focus on the specific exponential function $ w_{1,1}(x) $, which was initially utilized by Laskar. We would also like to refer to Theorem III.2 by Ruth and Bindel in \cite{MR4833591}, although it is quite different from our case.

\begin{theorem}\label{DWRYZS}
		Consider the weighted Birkhoff error term $ {\bf Error}_{N} $ in \eqref{DWERROR}, where the weighting function is replaced by $ w_{p,q}(x) $ for $ p,q\geqslant 1 $. Assume further that the observable is ``super-analytic'', defined by its Fourier coefficients satisfying $ |f_k| =\mathcal{O}\left(\exp\left(-|k|^{v}\right)\right) $ for any $ v>1 $. Then, for any arbitrarily given $ 0<\mathbf{a} <1$ and almost all rotations $ \rho \in \mathbb{T}^d $, there exist sufficiently large $ p,q\geqslant 1 $ such that
		\[{\mathbf{Error}}_N = \mathcal{O}\left( {\exp \left( { - {c_{\rm IV}}{N^{\mathbf{a}}}} \right)} \right),\]
		where $ c_{\rm IV} >0$ is some universal constant. 
\end{theorem}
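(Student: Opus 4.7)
The plan is to adapt the argument of Theorem \ref{DWJQ}(I) by replacing the derivative estimate for $w$ with the sharper one for $w_{p,q}$ supplied by Lemma \ref{DWLEMMA51}, and by replacing the analytic Fourier decay $e^{-2\pi r|k|}$ with the super-analytic decay $e^{-|k|^v}$. Fix $\mathbf{a} \in (0,1)$. The strategic choice is to take $\min\{p,q\}$ large enough that $\beta := 1 + 1/\min\{p,q\}$ lies just above $1$, and then to invoke super-analyticity with an exponent $v > 1$ that is correspondingly large. Throughout, I use the Herman-type nonresonance condition \eqref{DWALMOST}, which holds for almost every rotation $\rho$.

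Following the decomposition \eqref{DWFENDUAN}, write $\mathbf{Error}_N = \mathscr{S}_{\rm T} + \mathscr{S}_{\rm R}$ around a truncation level $\mathcal{K}(N)$. For the truncated term, Poisson summation followed by $m_N$-fold integration by parts, combined with Lemma \ref{DWLEMMA51}, yields
\[
\left|\int_0^1 w_{p,q}(z) e^{2\pi N i(k\cdot\rho - n_k)z} dz\right| \leqslant \left(\frac{\tilde{\lambda} m_N^\beta |k|^d \ln^\zeta(1 + |k|)}{2\pi\alpha N}\right)^{m_N}.
\]
An elementary calculus computation shows that $(c m^\beta)^m$ is minimized at $m^\ast = e^{-1} c^{-1/\beta}$, with minimum value $\exp(-\beta e^{-1} c^{-1/\beta})$. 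Choosing $m_N$ according to this optimum with $|k| \leqslant \mathcal{K}(N)$, and controlling the tail of the Poisson sum exactly as in \eqref{DWIIIIII}, I obtain
\[
|\mathscr{S}_{\rm T}| = \mathcal{O}\left(\exp\left(-c_\beta \left(\frac{N}{\mathcal{K}(N)^d \ln^\zeta(1 + \mathcal{K}(N))}\right)^{1/\beta}\right)\right),
\]
where $c_\beta = \beta e^{-1} (2\pi\alpha/\tilde{\lambda})^{1/\beta} > 0$.

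For the remainder, the super-analyticity hypothesis delivers $|\mathscr{S}_{\rm R}| = \mathcal{O}(\exp(-c' \mathcal{K}(N)^v))$ for any prescribed $v > 1$. Balancing the two scales via $\mathcal{K}(N) \sim N^{1/(v\beta + d)}$, and absorbing the logarithmic correction into an arbitrarily small exponent loss, produces an overall rate of the form $\exp(-c'' N^{v/(v\beta + d)})$. The required inequality $v/(v\beta + d) \geqslant \mathbf{a}$ is equivalent to $v(1 - \mathbf{a}\beta) \geqslant \mathbf{a} d$. Accordingly, first choose $\min\{p,q\} > \mathbf{a}/(1 - \mathbf{a})$ so that $\beta < 1/\mathbf{a}$ and $1 - \mathbf{a}\beta > 0$; then pick $v > \max\{1, \mathbf{a} d/(1 - \mathbf{a}\beta)\}$, which is permitted by the super-analyticity hypothesis.

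The main obstacle is not conceptual---the structure closely parallels Theorem \ref{DWJQ}(I)---but rather the careful bookkeeping required to propagate the logarithmic factor $\ln^\zeta(1 + \mathcal{K}(N))$ through the optimization and to verify that the final exponent strictly exceeds $\mathbf{a}$ with headroom to absorb all subexponential losses into the universal constant $c_{\rm IV}$. A secondary technicality is checking that the optimal $m_N$ satisfies $m_N \geqslant 2$ for $N$ sufficiently large; this is ensured by the polynomial growth of $\mathcal{K}(N)$ in $N$. With these ingredients in place, the claimed bound $\mathbf{Error}_N = \mathcal{O}(\exp(-c_{\rm IV} N^{\mathbf{a}}))$ follows.
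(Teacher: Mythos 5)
Your proposal follows essentially the same route as the paper's proof: decompose $\mathbf{Error}_N$ into truncated and remainder terms via \eqref{DWFENDUAN}, apply Lemma \ref{DWLEMMA51} to bound $\|D^{m_N}w_{p,q}\|_{L^1}$, optimize the number of integrations by parts $m_N$ to obtain $|\mathscr{S}_{\rm T}| = \mathcal{O}(\exp(-c\, (N/\mathcal{K}(N)^d\ln^\zeta(1+\mathcal{K}(N)))^{1/\mathbf{b}}))$, use super-analyticity for $|\mathscr{S}_{\rm R}| = \mathcal{O}(\exp(-\mathcal{K}(N)^v))$, and balance the two at $\mathcal{K}(N) \sim N^{1/(v\mathbf{b}+d)}$. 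The only cosmetic differences are that you work out the minimization $(cm^\beta)^m$ explicitly rather than citing analogy with \eqref{DWTRUNC} (which also reveals that the prefactor $2e^{-1}$ in the paper is more precisely $\mathbf{b}e^{-1}$, an immaterial constant), and you give concrete inequalities on $\min\{p,q\}$ and $v$ where the paper takes iterated limits $p,q,v\to\infty$; both correctly establish that the exponent $v/(v\mathbf{b}+d)$ can be pushed strictly above any $\mathbf{a}<1$.
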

\begin{remark}
Theorem \ref{DWRYZS} serves as a supplement of the two fundamental questions (Q1) and (Q2) discussed in  Section \ref{DWPQZHISHU} and also the work of \cite{MR4768308,TLCesaro,TLmultiple,MR3718733,MR4665059}.
\end{remark}
\begin{proof}
	The analysis follows a similar approach to that in Theorem \ref{DWJQ}.  Denote $ {\mathbf{b}} := 1 + \frac{1}{{\min \left\{ {p,q} \right\}}}\in (0,1) $ and let $ v>1 $ be arbitrarily given. Here, we utilize the nonresonant condition  \eqref{DWALMOST}, which holds for almost all rotations $ \rho \in \mathbb{T}^d $. Define the truncation order as  $ \mathcal{K}\left( N \right) = {N^{\frac{1}{{d + v{\mathbf{b}}}}}}{\left( {\ln N} \right)^{ - \frac{\zeta }{{d + v{\mathbf{b}}}}}} $. By applying the truncation technique and Lemma \ref{DWLEMMA51}, we obtain estimates analogous to \eqref{DWTRUNC} and \eqref{DWIIIIIII}:
\[\left| {{\mathscr{S}_{\rm{T}}}} \right| = \mathcal{O}\left( {\exp \left( { - 2{e^{ - 1}}{{\left( {\frac{{2\pi \alpha N}}{{\tilde \lambda \left( {p,q} \right)\mathcal{K}{{\left( N \right)}^d}{{\ln }^\zeta }\left( {1 + \mathcal{K}\left( N \right)} \right)}}} \right)}^{1/{\mathbf{b}}}}} \right)} \right),\]
and
\[	\left| {{\mathscr{S}_{\rm R}}} \right| \leqslant \sum\limits_{0 \ne k \in {\mathbb{Z}^d},\left| k \right| \geqslant \mathcal{K}\left( N \right)} {|{{\hat f}_k}|\frac{1}{{{A_N}}}\sum\limits_{n = 0}^{N - 1} {w\left( {n/N} \right)} }  = \sum\limits_{0 \ne k \in {\mathbb{Z}^d},\left| k \right|\geqslant \mathcal{K}\left( N \right)} {|{{\hat f}_k}|}  = \mathcal{O}\left( {{\exp\left(- {\mathcal{K}}\left( N \right)^v\right)}} \right),\]
where the latter follows from the ``super-analyticity'' of $ f $. Given the choice of $ \mathcal{K}\left( N \right)  $, we arrive at 
\[{\mathbf{Error}}_N \leqslant\left| {{\mathscr{S}_{\rm{T}}}} \right| + \left| {{\mathscr{S}_{\rm{R}}}} \right| = \mathcal{O}\left( {\exp \left( { - {c_{{\rm IV}}}{N^{\frac{v}{{d + v{\mathbf{b}}}}}}{{\left( {\ln N} \right)}^{ - \frac{{\zeta v}}{{d + v{\mathbf{b}}}}}}} \right)} \right)\]
for some universal $ {c_{{\rm IV}}}>0 $. This establishes the desired conclusion for sufficiently large  $ v,p,q>1 $, since $0< \frac{v}{{d + v{\mathbf{b}}}} <1 $, and 
\[\mathop {\lim }\limits_{p,q \to  + \infty } \mathop {\lim }\limits_{v \to  + \infty } \frac{v}{{d + v{\mathbf{b}}}} = \mathop {\lim }\limits_{p,q \to  + \infty } {{\mathbf{b}}^{ - 1}} = \mathop {\lim }\limits_{p,q \to  + \infty } {\left( {1 + \frac{1}{{\min \left\{ {p,q} \right\}}}} \right)^{ - 1}} = 1.\]
\end{proof}

We end this section by mentioning that the above analysis can be adapted to address the almost periodic case (though more complex), as discussed in \cite{TLmultiple}.  This is achieved by utilizing the infinite-dimensional spatial structure provided by Montalto and Procesi \cite{MR4201442}, among others. We plan to delve into this topic in future research.

\section{Numerical simulation and analysis of convergence rates}\label{DWSEC7}
In this section, we present  an example to illustrate our quantitative estimates of the convergence rates as stated in Theorem \ref{SWT1}. Let the decaying parameter be $ \lambda=2 $,  the rotating parameter be $ \rho=3 $,  and the initial phase parameter be  $ \theta=1 $. In this case,
\[{{\rm{DW}}_N}\left( {2 ,3 ,1 } \right) = \frac{1}{N}\sum\limits_{n = 0}^{N - 1} {{e^{ - 2 n}}\sin \left( {1  + 3n } \right)},\;\;{\rm{WDW}}_N\left( {2 ,3 ,1 } \right) = \frac{1}{{{A_N}}}\sum\limits_{n = 0}^{N - 1} {w\left( {n/N} \right){e^{ - 2 n}}\sin \left( {1  + 3n } \right)}. \]
It can be verified that $ {e^{ - \lambda }}\sin \left( {\rho  - \theta } \right) + \sin \theta  = {e^{ - 2}}\sin 2 + \sin 1 \approx 0.96 \ne 0 $, and
 \[\xi  =\xi (\lambda,\rho)= \sqrt {2\lambda }  + {e^{ - 1}}\sqrt {{\lambda ^2} + \left\| \rho  \right\|_{\mathbb{T}}^2}  = 2 + \sqrt {13} {e^{ - 1}}.\]
  Therefore, Theorem \ref{SWT1} tells us  that the unweighted average $ {{\rm{DW}}_N}\left( {2 ,3 ,1 } \right) $ exhibits polynomial convergence of $ \mathcal{O}^{\#}\left(N^{-1}\right) $, while the weighted average ${\rm{WDW}}_N\left( {2 ,3 ,1 } \right)  $ demonstrates exponential convergence as given by
\begin{equation}\label{DWLIZI}
	\mathcal{O}\left( {\exp \left( { - \xi \sqrt N } \right)} \right) = \mathcal{O}\left( {\exp \left( { - \left( {2 + \sqrt {13} {e^{ - 1}}} \right)\sqrt N } \right)} \right).
\end{equation}
  We would like to emphasize that  \eqref{DWLIZI} is exceptionally  precise. For $ N=100 $, \eqref{DWLIZI} provides an upper bound {{(without the universal control coefficient)}} on the absolute error of approximately $ 3.58 \times {10^{ - 15}} $, while the actual error is around $ 2.04 \times {10^{ - 15}} $, as illustrated  in Figure \ref{FIGDW1}; for $ N=1000 $, \eqref{DWLIZI} yields  an upper bound {{(without the universal control coefficient)}} on the absolute error of approximately $ 2.07 \times {10^{ - 46}} $, compared to the actual error of approximately $ 2.11 \times {10^{ - 47}} $, {{see details in Figure \ref{FIGDW2}. There, we set the variable on the $ x $-axis to $ \sqrt{N} $. Since the variable on the $ y $-axis changes logarithmically, the theoretical error (indicated in orange) is a straight line with a slope of $ -\xi=-(2+\sqrt{13}e^{-1}) $. This closely matches the variation in the actual error (indicated in purple), thereby demonstrating the accuracy of our estimates in this paper.}}

\begin{figure}[h] 	\centering 	\includegraphics[width=300pt]{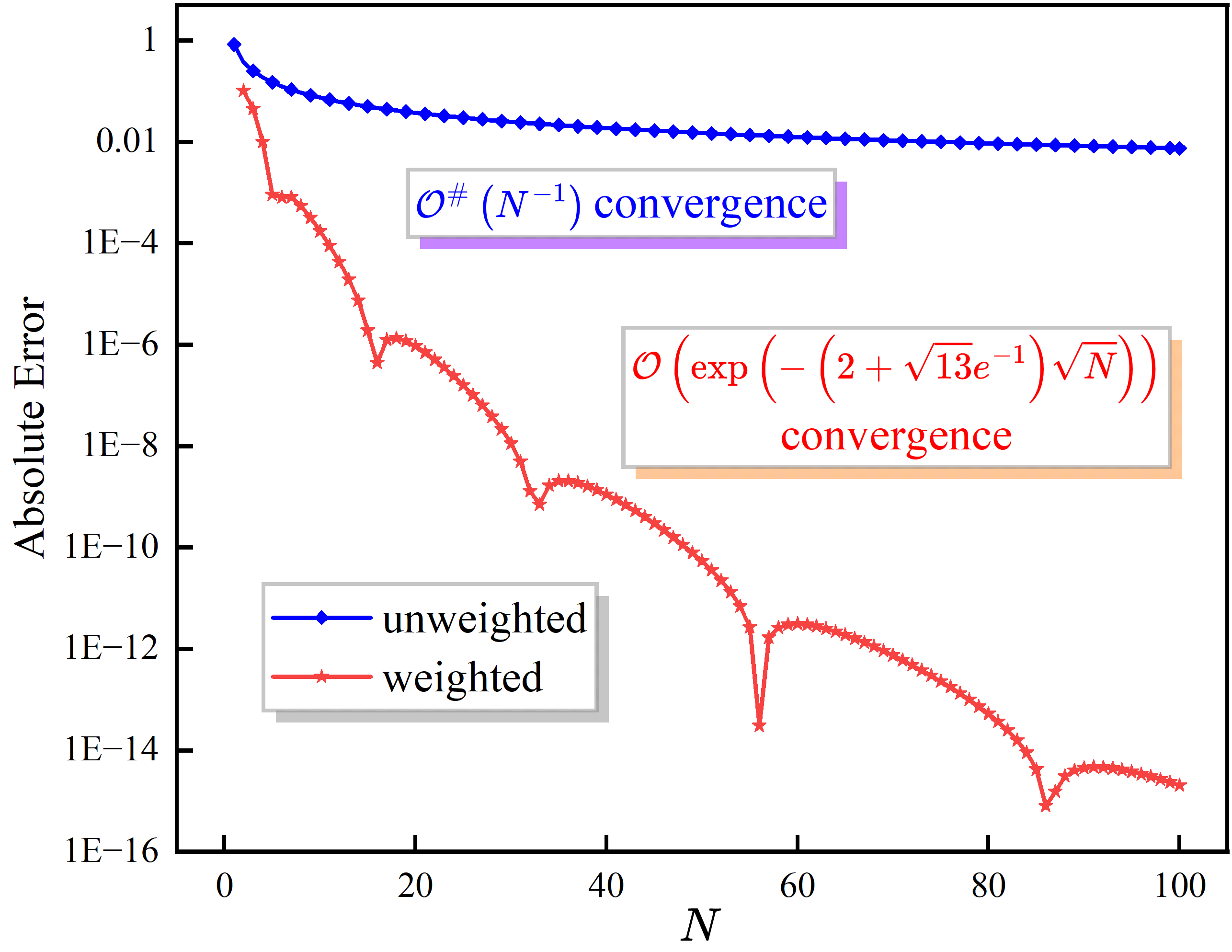} 	\caption {Comparison  of the  convergence rates} 	\label{FIGDW1} \end{figure}

\begin{figure}[h] 	\centering 	\includegraphics[width=300pt]{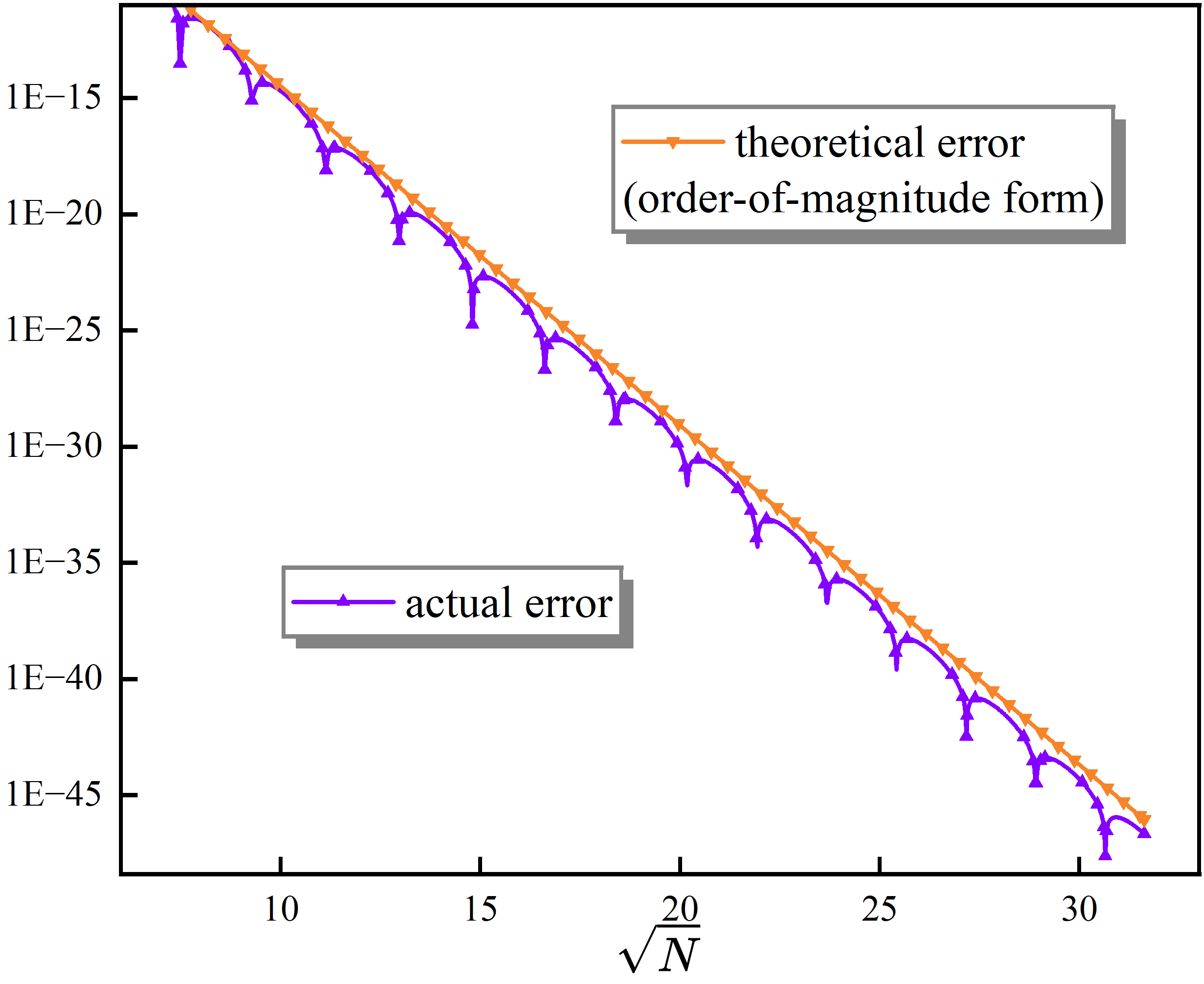} 	\caption {Comparison of the theoretical error with the actual error} 	\label{FIGDW2} \end{figure}

\setcounter{footnote}{0}
\renewcommand{\thefootnote}{\fnsymbol{footnote}}
Actually, on the standard torus $  \mathbb{T} = \mathbb{R}/2\pi \mathbb{Z} $, the rotation number with respect to the rotating parameter in this case is $ 3/2\pi  $, which admits a Diophantine exponent of $ \delta =6.11$\footnote{Indeed, for any irrational $ x $ with an exact Diophantine exponent, $ \frac{{ax + b}}{{cx + d}} $ admits the same exact Diophantine exponent as $ x $, provided that $ a,b,c,d $ are rational and $ ad-bc \ne 0 $. As a consequence, $ \frac{3}{{2\pi }} $ admits the same exact Diophantine exponent as  $ \pi$, as seen in the equivalent expression $ \frac{3}{{2\pi }} + 1 = \frac{{2\pi  + 3}}{{2\pi  + 0}} $.}, 
i.e.,
\[\left| {q \cdot \frac{3}{{2\pi }} - p} \right| > \frac{C}{{{{\left| q \right|}^\delta }}},\;\;\forall q \in {\mathbb{N}^ + },\;\;\forall p \in \mathbb{Z}\]
for some $ C>0 $, see Zeilberger and Zudilin  \cite{MR4170705} for a more accurate  estimate on this aspect. We intentionally avoided using the \textit{constant type} (with exact Diophantine exponent $ 2 $\footnote{It is well known that such rotation numbers only form a set of zero Lebesgue measure in $ \mathbb{R} $.}) rotation numbers that would have further accelerated the convergence rate. Instead, we select a less irrational (more universal) alternative as $ 3/2\pi  $. We also refer to Das et al. \cite{MR3718733} for a numerical comparison of the two cases in the \textit{weighted Birkhoff average} with rotation numbers $ \pi-3 $ (having the same Diophantine exponent as $ 3/2\pi  $) and $ \sqrt{2} -1$ (the constant type).

 \section*{Acknowledgements} 
 This work is supported in part by the National Natural Science Foundation of China (Grant Nos. 12071175 and 12471183). The first author extends heartfelt gratitude to Professors Wolfgang M. Schmidt, Paul Vojta, Doron Zeilberger, and Wadim Zudilin for their insightful email correspondence on Diophantine approximation. Additionally, the first author acknowledges Professors Maximilian Ruth and David Bindel for their enlightening email exchanges regarding the precise convergence rates in weighted Birkhoff averages.  The first author also wishes to express appreciation to Professors Aleksandr G. Kachurovski\u{\i}, Ivan V. Podvigin, and Valery V. Ryzhikov for their invaluable assistance and discussions. {{Finally, the authors would also like to sincerely thank the anonymous referees for their valuable suggestions and comments which led to a significant improvement of the paper.}}

\end{document}